\renewcommand\eqref[1]{(\ref{#1})} %Need with hyperref
\numberwithin{equation}{section}
\theoremstyle{plain}
\newtheorem{thm}{Theorem}[section]
\newtheorem{cor}[thm]{Corollary}
\newtheorem{lem}[thm]{Lemma}
\theoremstyle{definition}
\newtheorem{rem}[thm]{Remark}
\newcommand{\Rn}{\mathbb R^{n}}
\def\e[#1]{{\textrm{e}}^{#1}}
\def\Rn{{\mathbb R}^n}
\def\G{{\mathbb G}}
\def\L{\mathfrak{L}}
\begin{document}

   \title[Global well-posedness for a semilinear heat
equation on manifolds]
   {Existence and non-existence of global solutions for semilinear heat equations and inequalities on sub-Riemannian manifolds, and Fujita exponent on unimodular Lie groups}

\author[M. Ruzhansky]{Michael Ruzhansky}
\address{
  Michael Ruzhansky:
  \endgraf
  Department of Mathematics: Analysis,
Logic and Discrete Mathematics
  \endgraf
  Ghent University, Belgium
  \endgraf
  and
  \endgraf
  School of Mathematical Sciences
    \endgraf
    Queen Mary University of London
  \endgraf
  United Kingdom
  \endgraf
  {\it E-mail address} {\rm michael.ruzhansky@ugent.be}
  }
 \author[N. Yessirkegenov]{Nurgissa Yessirkegenov}
\address{
  Nurgissa Yessirkegenov:
  \endgraf
  Department of Mathematics: Analysis, Logic and Discrete Mathematics
  \endgraf
    Ghent University, Belgium
\endgraf
  and
  \endgraf
  Suleyman Demirel University
  \endgraf
  Kaskelen, Kazakhstan
  \endgraf
  and
  \endgraf
  Institute of Mathematics and Mathematical Modeling, Kazakhstan
    \endgraf
  {\it E-mail address} {\rm nurgissa.yessirkegenov@gmail.com}
  }
\dedicatory{Dedicated to Tokio Matsuyama on the occasion of his 60$^{th}$ birthday}

\thanks{This research is funded by the Science Committee of the Ministry of Education and Science of the Republic of Kazakhstan (Grant No. AP09058474) and by the FWO Odysseus 1 grant G.0H94.18N: Analysis and Partial Differential Equations. Michael Ruzhansky was supported by the EPSRC grant EP/R003025/2 and by the Methusalem programme of the Ghent University Special
Research Fund (BOF) (Grant number 01M01021).   }

     \keywords{Semilinear heat equation, differential inequality, sub-Riemannian manifold, unimodular Lie group, global well-posedness, global solution, sub-Laplacian.}
     \subjclass[2010]{35K58, 58J35, 35R45.}

     \begin{abstract} In this paper we study the global well-posedness of the following Cauchy problem on a sub-Riemannian manifold $M$:
\begin{equation*}
\left\{
                \begin{array}{ll}
                  u_{t}-\L_{M} u=f(u), \;x\in M, \;t>0,\\
                  u(0,x)=u_{0}(x), \;x\in M,\\
                                  \end{array}
              \right.
\end{equation*}
for $u_{0}\geq 0$, where $\L_{M}$ is a sub-Laplacian of $M$. In the case when $M$ is a connected unimodular Lie group $\G$, which has polynomial volume growth, we obtain a critical Fujita exponent, namely, we prove that all solutions of the Cauchy problem with $u_{0}\not\equiv 0$, blow up in finite time if and only if $1<p\leq p_{F}:=1+2/D$ when $f(u)\simeq u^{p}$, where $D$ is the global dimension of $\G$. In the case $1<p<p_{F}$ and when $f:[0,\infty)\to [0,\infty)$ is a locally integrable function such that $f(u)\geq K_{2}u^{p}$ for some $K_{2}>0$, we also show that the differential inequality
$$
u_{t}-\L_{M} u\geq f(u)
$$
does not admit any nontrivial distributional (a function $u\in L^{p}_{loc}(Q)$ which satisfies the differential inequality in $\mathcal{D}^{\prime}(Q)$) solution $u\geq 0$ in $Q:=(0,\infty)\times\G$. Furthermore, in the case when $\G$ has exponential volume growth and $f:[0,\infty)\to[0,\infty)$ is a continuous increasing function such that $f(u)\leq K_{1}u^{p}$ for some $K_{1}>0$, we prove that the Cauchy problem has a global, classical solution for $1<p<\infty$ and some positive $u_{0}\in L^{q}(\G)$ with $1\leq q<\infty$. Moreover, we also discuss all these results in more general settings of sub-Riemannian manifolds $M$.
     \end{abstract}

  \maketitle

  \tableofcontents

\section{Introduction and results}
\label{SEC:intro}
Let $M$ be a sub-Riemannian manifold with a smooth volume $\mu$. Recall that if $X_{1},\ldots,X_{k}$ is a local orthonormal frame, then the horizontal gradient on $M$ is defined as
$$\nabla\varphi=\sum_{i=1}^{k}X_{i}(\varphi)X_{i},$$
where $X_{i}(\varphi)$ is the Lie derivative of $\varphi$ in the direction of $X_{i}$. Denote by ${\rm div}_{\mu}X$ the divergence of a vector field $X$ with respect to a volume $\mu$, and it is defined by the identity $L_{X}\mu=({\rm div}_{\mu}X)\mu$, where $L_{X}$ is the Lie derivative with respect to $X$. The sub-Laplacian associated with the sub-Riemannian structure is defined as the divergence of the gradient, that is, $\L_{M}\varphi={\rm div}_{\mu}(\nabla\varphi)$, and it can be written in a local orthonormal frame $X_{1},\ldots,X_{k}$ as
\begin{equation}\label{sublap_subR}
\L_{M}=\sum_{i=1}^{k}\left(X_{i}^{2}+({\rm div}_{\mu}X_{i})X_{i}\right).
\end{equation}
Therefore, the sub-Laplacian $\L_{M}$ is the natural generalisation of the Laplace-Beltrami operator defined on a Riemannian manifold. Note that the sub-Laplacian $\L_{M}$ always can be expressed as the sum of squares of the elements of the orthonormal frame plus a first order term that depends on the choice of the volume $\mu$.

Let $h_{t}(x,y)$ denote the heat kernel for $u_{t}-\L_{M} u=0$ for $x,y\in M$ and $t>0$, that is, for every $y \in M$ the function $u(t,x):=p_{t}(x,y)$, $x\in M$, $t>0$ is a classical solution to the heat equation $u_{t}-\L_{M} u=0$ in $(0, \infty)\times M$. Its existence, smoothness, symmetry, and positivity are guaranteed by classical results, see for instance \cite{Str86} or \cite[Section 2.2]{BBN12} and \cite[Section 2.1]{BBN16} for more references.

Let us consider the following Cauchy problem:
\begin{equation}\label{Cauchy_intro_Riem}
\left\{
                \begin{array}{ll}
                  u_{t}-\L_{M} u=f(u), \;x\in M, \;t>0,\\
                  u(0,x)=u_{0}(x), \;x\in M,\\
                                  \end{array}
              \right.
\end{equation}
for $u_{0}\geq 0$. Recall that
$$e^{t \L_{M}}u_{0}(x)=\int_{M}h_{t}(x,y)u_{0}(y)d\mu_{y}, \quad x\in M, \quad t>0.$$

In this paper, we show a sufficient condition on the initial data which guarantees the existence of global solutions of \eqref{Cauchy_intro_Riem} on $M$:
\begin{thm}\label{lem2} Let $M$ be a sub-Riemannian manifold. Let $f:[0,\infty)\to[0,\infty)$ be a continuous increasing function such that $f(u)\leq K_{1}u^{p}$ for some positive constant $K_{1}>0$ with $1<p<\infty$. Let $0\leq u_{0}\in L^{q}(M)$ with $1\leq q<\infty$ and assume that
\begin{equation}\label{lem2_for1}
\int_{0}^{\infty}\|e^{s\L_{M}}u_{0}\|_{L^{\infty}(M)}^{p-1}ds<\frac{1}{K_{1}(p-1)}.
\end{equation}
Then there exists a non-negative continuous curve $u:[0,\infty)\rightarrow L^{q}(M)$ which is a global solution to \eqref{Cauchy_intro_Riem} with initial value $u_{0}$. Moreover, we have
\begin{equation}\label{lem2_for2}
(e^{t\L_{M}}u_{0})(x)\leq u(t,x)\leq C(e^{t\L_{M}}u_{0})(x), \;\forall x\in M,\;\forall t\geq0,
\end{equation}
for some $C>1$ (depending on $u_{0}$). For example, \eqref{lem2_for2} holds with
$$C=\left(1-K_{1}(p-1)\int_{0}^{\infty}\|e^{s\L_{M}}u_{0}\|^{p-1}_{L^{\infty}(M)}ds\right)^{-\frac{1}{p-1}}.$$
\end{thm}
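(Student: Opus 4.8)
The plan is to construct the global solution by a monotone iteration (Picard-type) scheme built around the heat semigroup, using the Duhamel formula. I would define the iterates by $u_{0}(t,x) := (e^{-t\L_{M}}u_{0})(x)$ and
\begin{equation*}
u_{n+1}(t,x) := (e^{-t\L_{M}}u_{0})(x) + \int_{0}^{t} \bigl(e^{-(t-s)\L_{M}} f(u_{n}(s,\cdot))\bigr)(x)\,ds.
\end{equation*}
Since the heat kernel $h_{t}$ is positive and $f\geq 0$ is increasing, the sequence $\{u_{n}\}$ is non-negative and monotone increasing in $n$ provided one checks the first step $u_{1}\geq u_{0}$, which holds because $f\geq 0$. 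The whole scheme then rests on producing a uniform-in-$n$ upper bound so that the monotone limit $u := \lim_{n} u_{n}$ exists and is finite.

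The key step is the uniform upper bound, and this is where condition \eqref{lem2_for1} does its work. I would prove by induction that $u_{n}(t,x) \leq C\,(e^{-t\L_{M}}u_{0})(x)$ for the constant $C$ given in the statement. Assuming the bound for $u_{n}$ and using $f(u)\leq K_{1}u^{p}$ together with the pointwise estimate $e^{-(t-s)\L_{M}}f(u_{n}(s,\cdot)) \leq K_{1} C^{p} \,\|e^{-s\L_{M}}u_{0}\|_{L^{\infty}(M)}^{p-1}\,(e^{-t\L_{M}}u_{0})(x)$ — which follows from the semigroup property $e^{-(t-s)\L_{M}}e^{-s\L_{M}} = e^{-t\L_{M}}$ and from bounding $(e^{-s\L_{M}}u_{0})^{p-1}$ by its $L^{\infty}$ norm — one obtains
\begin{equation*}
u_{n+1}(t,x) \leq \Bigl(1 + K_{1} C^{p}\!\int_{0}^{\infty}\!\|e^{-s\L_{M}}u_{0}\|_{L^{\infty}(M)}^{p-1}ds\Bigr)(e^{-t\L_{M}}u_{0})(x).
\end{equation*}
The induction closes precisely when the bracketed factor is at most $C$, i.e. $1 + K_{1}C^{p}\theta \leq C$ with $\theta := \int_{0}^{\infty}\|e^{-s\L_{M}}u_{0}\|_{L^{\infty}(M)}^{p-1}ds$; solving $1 + K_{1}\theta\,C^{p} = C$ is consistent with the stated choice of $C$ under the smallness assumption $K_{1}(p-1)\theta < 1$, which is exactly \eqref{lem2_for1}. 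This algebraic consistency check is the main obstacle: one must verify that the $C$ displayed in the theorem satisfies the fixed-point inequality for the relevant range of $p$, and that \eqref{lem2_for1} is the sharp threshold guaranteeing a solution $C>1$ exists.

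Having secured the monotone pointwise limit $u$ with $e^{-t\L_{M}}u_{0}\leq u\leq C\,e^{-t\L_{M}}u_{0}$, which is exactly \eqref{lem2_for2}, I would pass to the limit in the Duhamel identity by monotone convergence to conclude that $u$ solves the integral equation, hence is a mild solution of \eqref{Cauchy_intro_Riem}. Finally I would verify that $u$ is a continuous curve into $L^{q}(M)$: the upper bound $u(t)\leq C\,e^{-t\L_{M}}u_{0}$ keeps $u(t)$ in $L^{q}(M)$ since $e^{-t\L_{M}}$ is a contraction (or at least bounded) on $L^{q}$, and continuity in $t$ follows from strong continuity of the semigroup on $L^{q}$ together with continuity of the Duhamel integral term, which the sandwich bound controls. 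Checking the endpoint continuity at $t=0$ and the integrability needed for the Duhamel term in the $L^{q}$ topology requires some care, but no new idea beyond the estimates already established.
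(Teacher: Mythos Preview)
Your overall strategy --- monotone Picard iteration $u_{n+1}=\mathfrak F u_n$ starting from $u_0(t)=e^{-t\L_M}u_0$, with a supersolution barrier to make the increasing limit finite --- is exactly the paper's strategy. The gap is in the barrier step. You attempt the induction with a \emph{constant} multiplier $C$, reducing to the algebraic inequality $1+K_1\theta\,C^{p}\leq C$ (with $\theta=\int_0^\infty\|e^{-s\L_M}u_0\|_{L^\infty}^{p-1}ds$). But the displayed $C=(1-K_1(p-1)\theta)^{-1/(p-1)}$ does \emph{not} satisfy this: for instance, when $p=2$ it would require $\theta^2\leq 0$. More generally, $g(C):=C-K_1\theta C^{p}$ has maximum value $\tfrac{p-1}{p}(K_1p\theta)^{-1/(p-1)}$, so a constant barrier exists only when $K_1\theta\leq p^{-p}(p-1)^{p-1}$, which is strictly stronger than \eqref{lem2_for1}. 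Hence your induction, as written, does not close under the stated hypothesis.

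The paper's remedy is to replace the constant $C$ by a \emph{time-dependent} multiplier $\omega(t)$ chosen to satisfy the ODE $\omega'(t)=K_1\|e^{-t\L_M}u_0\|_{L^\infty}^{p-1}\omega(t)^{p}$, $\omega(0)=1$, i.e.
\[
\omega(t)=\Bigl(1-K_1(p-1)\int_0^{t}\|e^{-s\L_M}u_0\|_{L^\infty}^{p-1}\,ds\Bigr)^{-1/(p-1)}.
\]
This $\omega$ satisfies the \emph{exact} integral identity $\omega(t)=1+K_1\int_0^{t}\|e^{-s\L_M}u_0\|_{L^\infty}^{p-1}\omega(s)^{p}\,ds$, which is precisely what is needed to show that $\mathfrak F$ maps the order interval $\{e^{-t\L_M}u_0\leq u(t)\leq \omega(t)e^{-t\L_M}u_0\}$ into itself (your semigroup/$L^\infty$-extraction computation then goes through with $\omega(s)^{p}$ inside the integral rather than a constant $C^{p}$ outside). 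Condition \eqref{lem2_for1} is exactly what guarantees $\omega(t)$ stays finite for all $t>0$, and $\sup_t\omega(t)=\omega(\infty)$ is the constant $C$ in \eqref{lem2_for2}. Once you make this change, the rest of your outline (monotonicity of the iterates, monotone/dominated convergence to pass to the limit in the Duhamel formula, $L^q$-continuity) matches the paper's proof.
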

\begin{rem} We refer to \cite{Zha99} and the references therein for the case when $\L_{M}$ is the Laplace-Beltrami operator, $M$ is a noncompact, complete Riemannian manifold with polynomial volume growth, which include those with nonnegative Ricci curvatures. For the case of Riemannian manifolds with negative sectional curvature, we refer to \cite{Pun12} as well as references therein.
\end{rem}
When we know the behavior of the heat kernel, then one can see that to satisfy the condition \eqref{lem2_for1} there appears a condition for the parameter $p$, which is usually called a Fujita exponent. For example, when $M$ is a connected unimodular Lie group with polynomial volume growth of order $D$, since we have by \eqref{est_heat_above} the estimate
$$\|e^{t\L}u_{0}\|_{L^{\infty}(\G)}\geq ct^{-D/2},\;t\rightarrow\infty,$$
for any nontrivial $u_{0}\geq 0$, then we see that the condition \eqref{lem2_for1} cannot be satisfied for $p\leq p_{F}:=1+2/D$. Here and in the sequel, when $M$ is a unimodular Lie group, we will simplify the notation by writing $\L$ instead of $\L_{M}$.

Usually, since the heat kernel is tightly connected to the volume growth, we will demonstrate below results on unimodular Lie groups, where only two situations may occur for the volume growth: polynomial and exponential. We will also discuss the obtained results in more general settings in Section \ref{SEC:app}.

Let $\G$ be a connected unimodular Lie group, endowed with the Haar measure, and let $X=\{X_{1},\cdots,X_{k}\}$ be a H\"{o}rmander system of left invariant vector fields. Let $\rho(x,y)$ be the Carnot-Carath\'{e}odory distance $\G\times \G \ni (x,y) \mapsto \rho(x,y)$ associated with $X$. We denote by $\rho(x)$ the distance from the unit element of the group to $x\in \G$. Let $V(t)$ be the volume of the ball $B(x,t)$ centred at $x\in\G$ and of radius $t>0$ for this distance. In this case, since the left invariant vector fields on $\G$ are divergence free with respect to the (right) Haar measure, as a consequence of \eqref{sublap_subR} the sub-Laplacian associated to the Haar measure
has the form of \enquote{sum of squares}, that is,
$$\L:=\sum_{i=1}^{k}X_{i}^{2}.$$
Then, on $\G$, the Cauchy problem \eqref{Cauchy_intro_Riem} becomes
\begin{equation}\label{Cauchy_intro}
\left\{
                \begin{array}{ll}
                  u_{t}-\L u=f(u), \;x\in \G, \;t>0,\\
                  u(0,x)=u_{0}(x), \;x\in \G,\\
                                  \end{array}
              \right.
\end{equation}
for $u_{0}\geq 0$.

Recall that we have $V(t)\simeq t^{d}$ for $t\in (0,1)$, where $d=d(\G,X)\in \mathbb{N}$ is the local dimension. In the case $t\geq1$, as we mentioned above, only two situations may occur, independently of the choice of $X$ (see e.g. \cite{CRT01} or \cite{Gui73}): either $\G$ has polynomial volume growth of order $D$, which means that there exists the global dimension $D=D(\G)\in \mathbb{N}_{0}$ (i.e. $\mathbb{N}\cup \{0\}$) such that $V(t)\simeq t^{D}$, $t\geq 1$, or $\G$ has exponential volume growth, that is, there exist positive constants $c_{1}$, $C_{1}$, $c_{2}$ and $C_{2}$ such that $c_{1}e^{c_{2} t}\leq V(t)\leq C_{1}e^{C_{2} t}$ for $t\geq1$. Note that (see e.g. \cite[Page 285]{CRT01}) the dimension $D$ at infinity depends only on the group $\G$ but not on the system $X$. Let us also recall that the closed subgroups of nilpotent Lie groups, connected Type $R$ Lie groups, motion groups, the Mautner group and compact groups are all examples of polynomial growth groups (see e.g. \cite[Section 1.5]{Sch93}). For examples of the unimodular Lie groups with exponential volume growth we can refer e.g. \cite[Section 2]{CM96} and references therein.

Let us now state the main results on $\G$:
\begin{thm}\label{main_thm} Let $\G$ be a connected unimodular Lie group with polynomial volume growth of order $D$ and let $1<p<\infty$.
\begin{enumerate}[label=(\roman*)]
\item Let $p<p_{F}=1+2/D$. Let $f:[0,\infty)\to [0,\infty)$ be a locally integrable function such that $f(u)\geq K_{2}u^{p}$ for some positive constant $K_{2}>0$. Then the differential inequality
\begin{equation}\label{heat_eq_thm_ineq}
u_{t}-\L u\geq f(u)
\end{equation}
does not admit any nontrivial distributional solution $u\geq 0$ in $(0,\infty)\times\G$.
\item Let $p=p_{F}=1+2/D$. Let $f:[0,\infty)\to [0,\infty)$ be a locally integrable function such that $f(u)\geq K_{2}u^{p}$ for some positive constant $K_{2}>0$. Then the equation
\begin{equation}\label{heat_eq_thm}
u_{t}-\L u= f(u)
\end{equation}
does not admit any nontrivial distributional solution $u\geq 0$ in $(0,\infty)\times\G$.
\item Let $p>p_{F}=1+2/D$. Let $f:[0,\infty)\to[0,\infty)$ be a continuous increasing function such that $f(u)\leq K_{1}u^{p}$ for some positive constant $K_{1}>0$. Then, for any $1\leq q<\infty$ the Cauchy problem \eqref{Cauchy_intro} has a global, classical solution for some positive $u_{0}\in L^{q}(\G)$.
\end{enumerate}
\end{thm}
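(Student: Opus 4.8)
The plan is to prove the non-existence parts (i)--(ii) by the rescaled test-function (nonlinear capacity) method, and the existence part (iii) by reducing it to Theorem~\ref{lem2}.

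For (i) and (ii), suppose $u\ge 0$ is a nontrivial distributional solution (of the inequality \eqref{heat_eq_thm_ineq} in case (i), of the equation \eqref{heat_eq_thm} in case (ii); the argument only uses $u_t-\L u\ge f(u)$) on $Q=(0,\infty)\times\G$. Since $\L=\sum_i X_i^2$ with the $X_i$ left-invariant and divergence-free, $\L$ is formally self-adjoint, so testing against any $0\le\varphi\in C_c^\infty(Q)$ and integrating by parts gives
$$K_2\int_Q u^p\varphi\,dx\,dt\le\int_Q f(u)\varphi\,dx\,dt\le\int_Q u\,(-\partial_t\varphi-\L\varphi)\,dx\,dt\le\int_Q u\,|\partial_t\varphi+\L\varphi|\,dx\,dt.$$
I would then insert the rescaled cut-off $\varphi=\varphi_R(t,x)=\zeta(t/R^2)^m\,\xi(\rho(x)/R)^m$, where $0\le\zeta,\xi\le1$ are smooth, equal to $1$ on $[0,1]$, supported in $[0,2]$, and $m\ge p'=p/(p-1)$, and apply Young's inequality to $u\,|\partial_t\varphi_R+\L\varphi_R|=(u\varphi_R^{1/p})\,(|\partial_t\varphi_R+\L\varphi_R|\varphi_R^{-1/p})$ so as to absorb the term $\tfrac{K_2}{2}\int_Q u^p\varphi_R$ into the left-hand side.

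This reduces everything to the capacity integral $\int_Q|\partial_t\varphi_R+\L\varphi_R|^{p'}\varphi_R^{-p'/p}\,dx\,dt$. Exploiting the parabolic scaling $t\sim R^2$, $\rho(x)\sim R$, the bounds $|\partial_t\varphi_R|,|\L\varphi_R|\lesssim R^{-2}$ on the support of the cut-off derivatives, and the polynomial volume growth $V(R)\simeq R^D$, I expect this integral to be $\le C R^{D+2-2p'}$ for $R\ge1$. Because $D+2-2p'<0\iff p<p_F$, in case (i) the bound $\tfrac{K_2}{2}\int_Q u^p\varphi_R\le CR^{D+2-2p'}\to0$ together with Fatou's lemma (and $\varphi_R\to1$) forces $\int_Q u^p=0$, i.e. $u\equiv0$. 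In case (ii) one has exactly $D+2-2p'=0$, so the same computation only gives $\int_Q u^p\varphi_R\le C$, hence $\int_Q u^p\,dx\,dt<\infty$; one then sharpens the step above by applying Hölder on the support $A_R$ of $\partial_t\varphi_R+\L\varphi_R$,
$$K_2\int_Q u^p\varphi_R\,dx\,dt\le\Big(\int_{A_R}u^p\varphi_R\,dx\,dt\Big)^{1/p}\Big(\int_{A_R}|\partial_t\varphi_R+\L\varphi_R|^{p'}\varphi_R^{-p'/p}\,dx\,dt\Big)^{1/p'},$$
where the second factor stays bounded while the first tends to $0$ (since $A_R$ recedes to infinity in space-time and $u^p\in L^1(Q)$), so $u\equiv0$. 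The main obstacles are precisely this scaling estimate for the capacity integral --- in particular controlling the second-order term $\L\varphi_R$, for which one needs a bound on $\L\rho$ from the sub-Riemannian structure or, more robustly, a regularised distance cut-off --- and the critical refinement, which is what separates (ii) from (i).

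For (iii), by Theorem~\ref{lem2} it suffices to exhibit a positive $u_0\in L^q(\G)$ verifying \eqref{lem2_for1}. I would take $u_0=\lambda h_1$ with $\lambda>0$ to be fixed, where $h_1$ is the heat kernel at time $1$: it is everywhere positive, bounded and integrable, hence $u_0>0$ and $u_0\in L^q(\G)$ for every $1\le q<\infty$ simultaneously. Using the semigroup identity $e^{-s\L}h_1=h_{s+1}$ and the on-diagonal upper bound $\|h_t\|_{L^\infty(\G)}\le Ct^{-D/2}$ for $t\ge1$ (a standard consequence of the Gaussian heat-kernel estimates on groups of polynomial growth, cf.\ \eqref{est_heat_above}), one gets
$$\int_0^\infty\|e^{-s\L}u_0\|_{L^\infty(\G)}^{p-1}\,ds=\lambda^{p-1}\int_0^\infty\|h_{s+1}\|_{L^\infty(\G)}^{p-1}\,ds\le\lambda^{p-1}C^{p-1}\int_0^\infty(s+1)^{-(p-1)D/2}\,ds=:\lambda^{p-1}J_0.$$
Since $p>p_F$ is equivalent to $(p-1)D/2>1$, the integral $J_0$ is finite, and taking $\lambda$ so small that $\lambda^{p-1}J_0<\frac{1}{K_1(p-1)}$ verifies \eqref{lem2_for1}. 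Theorem~\ref{lem2} then produces a global solution obeying \eqref{lem2_for2}, and its classicality follows by bootstrapping the Duhamel formula $u(t)=e^{-t\L}u_0+\int_0^t e^{-(t-s)\L}f(u(s))\,ds$ using the smoothing of the heat semigroup, the continuity of $f$, and the local boundedness granted by \eqref{lem2_for2}. The only essential input here is the decay $\|h_t\|_\infty\simeq t^{-D/2}$, which is exactly the polynomial-growth feature that fixes the threshold $p_F$.
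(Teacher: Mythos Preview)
Your treatment of (iii) is essentially the paper's: both reduce to Theorem~\ref{lem2} by taking $u_0$ a small multiple of the heat kernel and using $\|h_t\|_{L^\infty}\lesssim t^{-D/2}$ together with $(p-1)D/2>1$ to make the integral in \eqref{lem2_for1} finite. (The paper formulates this as Theorem~\ref{main_thm3}, with $u_0\le\varepsilon h_\gamma$ rather than $u_0=\lambda h_1$, but the content is identical.)

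For (i)--(ii) your route is genuinely different. The paper does \emph{not} use the Mitidieri--Pohozaev capacity method; instead it passes to the integral (mild) formulation and proves a Weissler-type a priori bound (Lemma~\ref{lem1}): if $u$ solves the integral inequality on $[0,T]$ then $t^{1/(p-1)}\|e^{-t\L}u_0\|_{L^\infty}\le A_p$, obtained by applying $e^{-(\tau-t)\L}$ to the Duhamel inequality, using Jensen, and integrating the ODE $\phi'\ge K_2\phi^p$. For $p<p_F$ this bound forces $t^{D/2}\|u_0*h_t\|_{L^\infty}\to0$, contradicting the lower Gaussian bound \eqref{est_heat_above}. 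For $p=p_F$ the paper first extracts a uniform $L^1$ bound on $u(\tau)$ (Corollary~\ref{cor1} plus \eqref{proof_eq2}), then shows $u(s+c)\ge C_3 h_{s+1}$ and computes $\|h_{s+1}^p\|_{L^1}\gtrsim (s+1)^{-1}$, so that $\|u(t+c)\|_{L^1}\gtrsim\log t\to\infty$, a contradiction.

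What each buys: your capacity argument works directly at the distributional level and treats inequality and equation uniformly, but---as you yourself flag---it hinges on the estimate $|\L\varphi_R|\lesssim R^{-2}$, which on a general connected unimodular Lie group of polynomial growth is not free: the Carnot--Carath\'eodory distance $\rho$ is only Lipschitz, there is no homogeneous dilation, and even a regularised distance needs a Laplacian-comparison bound $|\L\tilde\rho|\lesssim\tilde\rho^{-1}$ at large scale, which you would have to justify. The paper's approach sidesteps this entirely: its only structural inputs are the two-sided Gaussian bounds \eqref{est_heat_above}, the semigroup identity $h_{t+s}=h_t*h_s$, and $\|h_t\|_{L^1}=1$, all of which are quoted from \cite{VCS92}. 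So the paper's method is more robust in this generality, while yours is more elementary when the cut-off estimates are available (e.g.\ on stratified groups with a smooth homogeneous norm).
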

\begin{rem} By a distributional solution, we mean in Parts (i) and (ii) of Theorem \ref{main_thm} a function $u\in L^{p}_{loc}(Q)$ which satisfies \eqref{heat_eq_thm_ineq} and \eqref{heat_eq_thm} in $\mathcal{D}^{\prime}(Q)$, respectively, where $Q:=(0,\infty)\times\G$.
\end{rem}
\begin{thm}\label{main_thm_exp} Let $\G$ be a connected unimodular Lie group with exponential volume growth and let $1<p<\infty$. Let $f:[0,\infty)\to[0,\infty)$ be a continuous increasing function such that $f(u)\leq K_{1}u^{p}$ for some positive constant $K_{1}>0$. Then, for any $1\leq q<\infty$ the Cauchy problem \eqref{Cauchy_intro} has a global, classical solution for some positive $u_{0}\in L^{q}(\G)$.
\end{thm}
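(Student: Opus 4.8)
The plan is to deduce the statement directly from Theorem \ref{lem2}. By that theorem it suffices to produce a single non-negative $u_0\in L^q(\G)$ for which the integral in \eqref{lem2_for1} is finite: the strict smallness inequality \eqref{lem2_for1} then comes for free by replacing $u_0$ with $\lambda u_0$ for $\lambda>0$ small, since $\|e^{-s\L}(\lambda u_0)\|_{L^\infty(\G)}=\lambda\|e^{-s\L}u_0\|_{L^\infty(\G)}$ rescales the integral by $\lambda^{p-1}$. To handle all exponents $q\in[1,\infty)$ at once, I would take $u_0$ to be a fixed strictly positive function lying in $L^1(\G)\cap L^\infty(\G)$ — for instance the heat kernel $h_1$, or a smooth non-negative bump — so that automatically $u_0\in L^q(\G)$ for every $q\in[1,\infty)$. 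It then remains only to bound $\int_0^\infty\|e^{-s\L}u_0\|_{L^\infty(\G)}^{p-1}\,ds$.

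The key ingredient is the heat-kernel bound on $\G$. Because $\G$ has local dimension $d$, the local Sobolev/Nash inequality gives the small-time ultracontractivity estimate $\|e^{-s\L}\|_{L^1(\G)\to L^\infty(\G)}\le Cs^{-d/2}$ for $0<s\le1$, while for $s\ge1$ the exponential volume growth forces the on-diagonal heat kernel to decay faster than any power of $s$, so that the same bound $\|e^{-s\L}\|_{L^1(\G)\to L^\infty(\G)}\le Cs^{-d/2}$ persists and hence holds for all $s>0$. I would then split the integral at $s=1$. On $(0,1)$ I use $L^\infty$-contractivity, $\|e^{-s\L}u_0\|_{L^\infty(\G)}\le\|u_0\|_{L^\infty(\G)}$, so that $\int_0^1\|e^{-s\L}u_0\|_{L^\infty(\G)}^{p-1}\,ds\le\|u_0\|_{L^\infty(\G)}^{p-1}<\infty$. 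On $(1,\infty)$ I use $\|e^{-s\L}u_0\|_{L^\infty(\G)}\le\|e^{-s\L}\|_{L^1(\G)\to L^\infty(\G)}\|u_0\|_{L^1(\G)}\le Cs^{-d/2}\|u_0\|_{L^1(\G)}$, whence
\begin{equation*}
\int_1^\infty\|e^{-s\L}u_0\|_{L^\infty(\G)}^{p-1}\,ds\le C^{p-1}\|u_0\|_{L^1(\G)}^{p-1}\int_1^\infty s^{-\frac{d(p-1)}{2}}\,ds,
\end{equation*}
and the last integral converges exactly because $p>1+2/d$, i.e. $\frac{d(p-1)}{2}>1$. Thus the integral in \eqref{lem2_for1} is finite for this choice of $u_0$.

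Finally, rescaling $u_0\mapsto\lambda u_0$ with $\lambda$ small makes this integral smaller than $\frac{1}{K_1(p-1)}$, so Theorem \ref{lem2} applies and yields a global non-negative solution $u$ obeying the two-sided bound \eqref{lem2_for2}, namely $e^{-t\L}u_0\le u(t,\cdot)\le Ce^{-t\L}u_0$. Since the chosen $u_0$ is bounded and smooth, $e^{-t\L}u_0$ is smooth and locally bounded on $Q=(0,\infty)\times\G$, so \eqref{lem2_for2} forces $u$ to be locally bounded; standard parabolic regularity, using the continuity of $f$ and bootstrapping through the mild (Duhamel) formulation, then upgrades $u$ to a classical solution. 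I expect the only genuinely delicate point to be the large-time heat-kernel estimate: one must invoke that exponential volume growth makes $\|e^{-s\L}\|_{L^1(\G)\to L^\infty(\G)}$ decay faster than any polynomial, so that the crude rate $s^{-d/2}$ — sharp only as $s\to0^+$ — remains a legitimate upper bound for all $s>0$. This is precisely the structural input that separates the present case from the polynomial-growth case of Theorem \ref{main_thm}(iii) and replaces the global dimension $D$ appearing there by the local dimension $d$ here.
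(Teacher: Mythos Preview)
Your proposal is correct and follows essentially the same route as the paper: both reduce to Theorem~\ref{lem2} and verify \eqref{lem2_for1} using the heat-kernel bound of Theorem~\ref{est_heat_thm_exp} with $n=d$, which gives $\|h_t\|_{L^\infty(\G)}\le C t^{-d/2}$ for all $t>0$, and then exploit $d(p-1)/2>1$. The only cosmetic difference is that the paper takes $u_0=\varepsilon h_\gamma$ and uses the semigroup identity $h_\gamma\ast h_s=h_{s+\gamma}$ to handle the whole integral at once as $\int_0^\infty(s+\gamma)^{-d(p-1)/2}\,ds$, whereas you split at $s=1$ and use $L^\infty$-contractivity on $(0,1)$; since you yourself suggest $u_0=h_1$ as a concrete choice, the two arguments in fact coincide.
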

\begin{rem} Consider problem \eqref{Cauchy_intro} with $f(u)\simeq u^{p}$. Then, combining these Theorems \ref{main_thm} and \ref{main_thm_exp}, one comes to the following interesting conclusion: In the case $D=0$ (e.g. when $\G$ is a compact group), that is, the case when the volume growth at infinity is constant, we see from Theorem \ref{main_thm} that the Cauchy problem \eqref{Cauchy_intro} does not admit any nontrivial distributional solution $u\geq 0$ in $(0,\infty)\times\G$ for $1<p<\infty$. In the case of polynomial volume growth, there exists a global, classical solution of \eqref{Cauchy_intro} for $p>p_{F}=1+2/D$ and some positive $u_{0}\in L^{q}(\G)$ with $1\leq q<\infty$. When $\G$ has exponential volume growth, then the Cauchy problem \eqref{Cauchy_intro} has a global, classical solution for $p>1$ and some positive $u_{0}\in L^{q}(\G)$ with $1\leq q<\infty$ by Theorem \ref{main_thm_exp}. For compact Lie groups the same kind of phenomenon (blow-up in
finite time for all $p>1$ under suitable sign assumptions for the Cauchy data) has
been recently proved also for the semilinear wave and damped wave equations, see \cite{Pal21a, Pal21b}.
\end{rem}
Concerning the existence in Part (iii) of Theorem \ref{main_thm}, we actually have the following much stronger property:

\begin{thm}\label{main_thm3} Let $\G$ be a connected unimodular Lie group with polynomial volume growth of order $D$. Consider problem \eqref{Cauchy_intro} with $p_{F}<p<\infty$, $u_{0}\in L^{q}(\G)$ with $1\leq q<\infty$, and let $\gamma>0$. Let $f:[0,\infty)\to[0,\infty)$ be a continuous increasing function such that $f(u)\leq K_{1}u^{p}$ for some positive constant $K_{1}>0$. There exists $\varepsilon=\varepsilon(\gamma)>0$ such that, if
\begin{equation}\label{proof_part2_for1}
0\leq u_{0}(x)\leq \varepsilon h_{\gamma}(x),\;x\in \G,
\end{equation} then there exists a non-negative continuous curve $u:[0,\infty)\rightarrow L^{q}(\G)$ which is a global solution to \eqref{Cauchy_intro} with initial value $u_{0}$. Moreover, we have
\begin{equation}\label{proof_part2_for2}
u(t,x)\leq Ch_{t+\gamma}(x),\;x\in \G, \;t\in (0,\infty),
\end{equation}
for some $C=C(\gamma)>0$.
\end{thm}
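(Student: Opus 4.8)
The plan is to deduce this from the general existence result of Theorem \ref{lem2}, the point being to verify its hypothesis \eqref{lem2_for1} for every datum dominated by $\varepsilon h_{\gamma}$, uniformly in such data, once $\varepsilon$ is chosen small. The mechanism that makes this work is the comparison of the free evolution of $u_{0}$ with a shifted heat kernel: by positivity of the heat semigroup together with the semigroup (Chapman--Kolmogorov) identity $e^{-s\L}h_{\gamma}=h_{s+\gamma}$, the bound $0\le u_{0}\le\varepsilon h_{\gamma}$ propagates to
$$0\le (e^{-s\L}u_{0})(x)\le\varepsilon\,(e^{-s\L}h_{\gamma})(x)=\varepsilon\, h_{s+\gamma}(x),\qquad x\in\G,\ s\ge 0.$$
Taking suprema gives $\|e^{-s\L}u_{0}\|_{L^{\infty}(\G)}\le\varepsilon\|h_{s+\gamma}\|_{L^{\infty}(\G)}$, so the left-hand side of \eqref{lem2_for1} is controlled by $\varepsilon^{p-1}\int_{\gamma}^{\infty}\|h_{\tau}\|_{L^{\infty}(\G)}^{p-1}\,d\tau$ after the substitution $\tau=s+\gamma$.

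The second step is to show that this last integral, call it $I(\gamma)$, is finite precisely because $p>p_{F}$. Here I would invoke the on-diagonal heat kernel bound on a polynomial growth group, $\|h_{\tau}\|_{L^{\infty}(\G)}\simeq V(\sqrt{\tau})^{-1}$, which yields $\|h_{\tau}\|_{L^{\infty}(\G)}\lesssim\tau^{-D/2}$ for $\tau\ge 1$ (this is the same estimate that produces the lower bound $\|e^{-t\L}u_{0}\|_{L^{\infty}(\G)}\gtrsim t^{-D/2}$ behind the Fujita threshold). Since $\gamma>0$ is fixed, the contribution of $\tau\in[\gamma,1]$ is harmless: $\|h_{\tau}\|_{L^{\infty}(\G)}$ is bounded on a compact interval bounded away from the origin, so the small-time singularity $\tau^{-d/2}$ of the kernel never enters. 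The decisive point is the behaviour at infinity, where $\int_{1}^{\infty}\tau^{-(D/2)(p-1)}\,d\tau<\infty$ if and only if $(D/2)(p-1)>1$, that is, $p>1+2/D=p_{F}$. This is exactly where the hypothesis $p_{F}<p<\infty$ is used, and it is also where the shift by $\gamma$ is essential: it removes the non-integrable small-time singularity that an unshifted datum (such as a multiple of the Dirac mass) would produce.

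With $I(\gamma)<\infty$ in hand, I would simply choose $\varepsilon=\varepsilon(\gamma)>0$ so small that $\varepsilon^{p-1}I(\gamma)<(K_{1}(p-1))^{-1}$; for instance any $\varepsilon<(K_{1}(p-1)I(\gamma))^{-1/(p-1)}$ works. Then the pointwise bound \eqref{proof_part2_for1} forces \eqref{lem2_for1}, and Theorem \ref{lem2} delivers a non-negative continuous curve $u\colon[0,\infty)\to L^{q}(\G)$ solving \eqref{Cauchy_intro} (the membership $u_{0}\in L^{q}(\G)$ being consistent with $h_{\gamma}\in L^{q}(\G)$ for every $1\le q<\infty$). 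Finally, the pointwise upper bound \eqref{proof_part2_for2} comes for free from the right-hand inequality in \eqref{lem2_for2}: it gives $u(t,x)\le C_{0}\,(e^{-t\L}u_{0})(x)$, and combining this with $(e^{-t\L}u_{0})(x)\le\varepsilon\, h_{t+\gamma}(x)$ yields $u(t,x)\le C_{0}\varepsilon\, h_{t+\gamma}(x)$, so \eqref{proof_part2_for2} holds with $C=C_{0}\varepsilon$, a constant depending only on $\gamma$.

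I expect the only genuinely substantive ingredient to be the heat kernel estimate $\|h_{\tau}\|_{L^{\infty}(\G)}\simeq V(\sqrt{\tau})^{-1}\lesssim\tau^{-D/2}$ for large $\tau$ on polynomial growth unimodular Lie groups, which I would quote from the standard theory; everything else is a short comparison argument feeding into Theorem \ref{lem2}. The one place to stay careful is to keep the dependence of all constants on $\gamma$ transparent, so that $\varepsilon$ and $C$ end up depending on $\gamma$ alone (and on the fixed data $p,D,K_{1}$), as claimed.
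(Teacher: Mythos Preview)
Your proposal is correct and follows essentially the same route as the paper: reduce to Theorem~\ref{lem2} by verifying \eqref{lem2_for1}, use $e^{-s\L}u_{0}\le\varepsilon h_{s+\gamma}$ from positivity and the semigroup property, then control $\int_{\gamma}^{\infty}\|h_{\tau}\|_{L^{\infty}}^{p-1}d\tau$ via the on-diagonal bound from Theorem~\ref{est_heat_thm}, the convergence at infinity being exactly the condition $p>p_{F}$. Your write-up is in fact slightly more careful than the paper's in two places: you correctly carry the factor $\varepsilon^{p-1}$ (the paper writes $\varepsilon$), and you spell out how \eqref{proof_part2_for2} follows from \eqref{lem2_for2} combined with $e^{-t\L}u_{0}\le\varepsilon h_{t+\gamma}$.
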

Similarly, concerning Theorem \ref{main_thm_exp}, we have the following stronger property:
\begin{thm}\label{main_thm3_exp} Let $\G$ be a connected unimodular Lie group with exponential volume growth. Consider problem \eqref{Cauchy_intro} with $1<p<\infty$, $u_{0}\in L^{q}(\G)$ with $1\leq q<\infty$, and let $\gamma>0$. Let $f:[0,\infty)\to[0,\infty)$ be a continuous increasing function such that $f(u)\leq K_{1}u^{p}$ for some positive constant $K_{1}>0$. There exists $\varepsilon=\varepsilon(\gamma)>0$ such that, if
\begin{equation}\label{proof_part2_for1_exp}
0\leq u_{0}(x)\leq \varepsilon h_{\gamma}(x),\;x\in \G,
\end{equation} then there exists a non-negative continuous curve $u:[0,\infty)\rightarrow L^{q}(\G)$ which is a global solution to \eqref{Cauchy_intro} with initial value $u_{0}$. Moreover, we have
\begin{equation}\label{proof_part2_for2_exp}
u(t,x)\leq Ch_{t+\gamma}(x),\;x\in \G, \;t\in (0,\infty),
\end{equation}
for some $C=C(\gamma)>0$.
\end{thm}

In the abelian case $\G=(\Rn,+)$, the phenomenon of finite time blow up was first considered by H. Fujita in \cite{F66}, where in the proof Gaussian test functions depending only on $x$ (given by the heat kernel with $t$ as a parameter) were involved, hence requiring more regularity of the solutions in time. Since then, many people devoted themselves to this problem. For example, we refer to \cite{MP01} and \cite{QP07}, where the proof is based on rescalings of a simple, compactly supported test-function, depending on $x$ and $t$. A related proof can be found in \cite{BP85}, where the test-functions were obtained by solving an adjoint problem. We also refer to \cite{HM04, LP76, Q91, SW97, W80, W81} and the references therein for the case of $\G=(\Rn,+)$, as well as \cite{JKS16}, \cite{GP21} on the Heisenberg group and \cite{Pas98} on stratified Lie groups. In the case of stratified Lie groups the blow-up result has been studied also in \cite{GP19}. There is huge literature on such Euclidean problems that we do not even attempt to review here. 

For a comparison principle for weak solutions of $p$-Laplacian heat equation in a bounded domain, we refer to recent works \cite{LZZ18} when $\G=(\Rn,+)$, and to \cite{RS18} and \cite{RY20} when $\G$ is a graded Lie group, the latter two also allowing more general hypoelliptic differential operators (Rockland operators).

The paper is structured as follows. In Section \ref{SEC:proof} we give the proof of the main results. Finally, these results are discussed on more general settings in Section \ref{SEC:app}.

\medskip
The authors would like to thank Michinori Ishiwata from Osaka University for drawing our attention to the important literature on the subject. The authors also would like to thank Tommaso Bruno from Ghent University for his comments on the heat kernel estimates in the exponential volume growth case.

\section{Proofs}
\label{SEC:proof}
\begin{proof}[Proof of Theorem \ref{lem2}] Here, the argument required for the extension to the sub-Riemannian manifold $M$ is virtually the same as employed by Weissler in \cite[Theorem 3]{W81}, see also \cite[Section 20]{QP07} for more details and other arguments. So, as in \cite{W81} we study \eqref{Cauchy_intro_Riem} via the following integral equation:
$$u(t)=e^{t\L_{M}}u_{0}+\int_{0}^{t}e^{(t-s)\L_{M}}f(u(s))ds.$$
Denote
$$\omega(t):=\left(1-K_{1}(p-1)\int_{0}^{t}\|e^{s\L_{M}}u_{0}\|^{p-1}_{L^{\infty}(M)}ds\right)^{-\frac{1}{p-1}}.$$
Note that $\omega(0)=1$ and $\omega'(t)=K_{1}\|e^{t\L_{M}}u_{0}\|^{p-1}_{L^{\infty}(M)}(\omega(t))^{p}$. Then we have
\begin{equation}\label{eq_w(t)_1}
\omega(t)=1+K_{1}\int_{0}^{t}\|e^{s\L_{M}}u_{0}\|^{p-1}_{L^{\infty}(M)}(\omega(s))^{p}ds.
\end{equation}
Let $u:[0,\infty)\rightarrow L^{q}(M)$ be a continuous curve with $1\leq q<\infty$ and $e^{t\L_{M}}u_{0}\leq u(t)\leq \omega(t)e^{t\L_{M}}u_{0}$ for all $t\geq 0$. Noting this, if we denote
$$\mathfrak{F}u(t):=e^{t\L_{M}}u_{0}+\int_{0}^{t}e^{(t-s)\L_{M}}f(u(s))ds,$$
then using $f(u)\leq Ku^{p}$ and the positivity of the heat kernel (see Introduction for the references), we get
\begin{equation}\label{eq_w(t)_2}
\begin{split}
\mathfrak{F}u(t)&\leq e^{t\L_{M}}u_{0}+K_{1}\int_{0}^{t}e^{(t-s)\L_{M}}(u(s))^{p}ds
\\&\leq e^{t\L_{M}}u_{0}+K_{1}\int_{0}^{t}e^{(t-s)\L_{M}}(e^{s\L_{M}}u_{0})^{p}(\omega(s))^{p}ds\\&
\leq e^{t\L_{M}}u_{0}+K_{1}\int_{0}^{t}e^{(t-s)\L_{M}}(e^{s\L_{M}}u_{0})\|e^{s\L_{M}}u_{0}\|^{p-1}_{L^{\infty}(M)}(\omega(s))^{p}ds\\&
=e^{t\L_{M}}u_{0}\left(1+K_{1}\int_{0}^{t}\|e^{s\L_{M}}u_{0}\|^{p-1}_{L^{\infty}(\G)}(\omega(s))^{p}ds\right),
\end{split}
\end{equation}
which implies with \eqref{eq_w(t)_1} that $e^{t\L_{M}}u_{0}\leq \mathfrak{F}u(t)\leq \omega(t)e^{t\L_{M}}u_{0}$ for all $t\geq0$.

Now we take the sequence of functions $\{v_{k}(t)\}_{k=0}^{\infty}$ such that $v_{0}(t)=e^{t\L_{M}}u_{0}$ and $v_{k+1}(t)=\mathfrak{F}v_{k}(t)$, and show that this sequence converges to the desired solution. Note that since $e^{t\L_{M}}u_{0}\leq v_{0}(t)\leq \omega(t)e^{t\L_{M}}u_{0}$ then by induction and discussing as in \eqref{eq_w(t)_2}, we get $e^{t\L_{M}}u_{0}\leq v_{k}(t)\leq \omega(t)e^{t\L_{M}}u_{0}$ for each $k$. We also note that since $v_{0}(t)\leq v_{1}(t)$ for all $t\geq0$, and $v_{k}(t)\leq v_{k+1}(t)\Rightarrow \mathfrak{F} v_{k}(t)\leq \mathfrak{F}v_{k+1}(t)$ for all $t\geq0$ by monotonicity of function $f$, then by induction one obtains $v_{k}(t)\leq v_{k+1}(t)$ for all $t\geq0$. Then, the dominated convergence theorem implies that $v_{k}(t)$ converge in $L^{q}(M)$ to a function which we call $u(t)$. This with the fact that $e^{t\L_{M}}u_{0}\leq v_{k}(t)\leq \omega(t)e^{t\L_{M}}u_{0}$ for each $k$ gives $e^{t\L_{M}}u_{0}\leq u(t)\leq \omega(t)e^{t\L_{M}}u_{0}$ for all $t\geq0$.

Now we need to prove that the function $u(t)$ is a global, classical solution of \eqref{Cauchy_intro_Riem}. Note that since we have $v_{k}(t)\leq\omega(t)e^{t\L_{M}}u_{0}$ and $f(u)\leq K_{1}u^{p}$, then the functions $s\mapsto e^{(t-s)\L_{M}}f(v_{k}(s))$ are dominated by
\begin{equation*}
\begin{split}
e^{(t-s)\L_{M}}f(v_{k}(s))&\leq K_{1}e^{(t-s)\L_{M}}(v_{k}(s))^{p}\\&\leq
K_{1}e^{(t-s)\L_{M}}(\omega(s)e^{s\L_{M}}u_{0})^{p}\\&\leq K_{1}e^{t\L_{M}}u_{0}\|e^{s\L_{M}}u_{0}\|_{L^{\infty}(M)}^{p-1}(\omega(s))^{p}
\end{split}
\end{equation*}
in $L^{1}(0,t;L^{q}(M))$. These functions converge for every $0<s<t$ to $e^{(t-s)\L_{M}}f(u(s))$ monotonically in $L^{q}(M)$ since the dominating function is in $L^{q}(M)$ for every $0<s<t$ and the fact that $f$ is a continuous function. Then, the dominated convergence theorem for $L^{q}$-valued functions implies that
$$\lim_{k\rightarrow\infty}\int_{0}^{t}e^{(t-s)\L_{M}}f(v_{k}(s))ds=\int_{0}^{t}e^{(t-s)\L_{M}}f(u(s))ds,$$
which gives
$$u(t)=\lim_{k\rightarrow \infty} v_{k+1}(t)=\lim_{k\rightarrow \infty} \mathfrak{F}v_{k}(t)=\mathfrak{F} u(t),$$
which means that $u(t)$ is a global solution of \eqref{Cauchy_intro_Riem}. Continuity of $u(t)$ in $L^{q}(M)$ easily follows by standard arguments.

The proof is complete.
\end{proof}
Before giving the proof of the main results on unimodular groups, let us briefly recall some necessary notations and some facts from \cite{VCS92}.

Recall that the heat kernel $(t,x)\mapsto h_{t}(x)$ is a positive fundamental solution of $u_{t}-\L u=0$. This heat kernel satisfies the following (see e.g. \cite[Section I.3, Page 5]{VCS92} or \cite[Section 2.1.1, Page 295]{CRT01}) property:
\begin{equation}\label{h_norm_1}
\|h_{t}\|_{L^{1}(\G)}=1, \;\forall t>0.
\end{equation}
Let $\rho(x,y)$ be the Carnot-Carath\'{e}odory distance $\G\times \G \ni (x,y) \mapsto \rho(x,y)$ associated with the H\"{o}rmander system of left invariant vector fields $X$. We also recall that $\rho$ is symmetric and satisfies the triangle inequality (see e.g. \cite[Section III.4, Page 39]{VCS92}).

We will also use the following fundamental properties:
\begin{thm}\cite[VIII.2.9 Theorem]{VCS92}\label{est_heat_thm} Let $\G$ be a connected unimodular Lie group with polynomial volume growth. Then there exist positive constants $C_{1}>0$ and $C_{2}>0$ such that
\begin{equation}\label{est_heat_above}
C_{1} V(\sqrt{t})^{-1}\exp\left(-C_{2}\frac{(\rho(x))^{2}}{t}\right)\leq h_{t}(x)\leq C_{2} V(\sqrt{t})^{-1}\exp\left(-C_{1}\frac{(\rho(x))^{2}}{t}\right),
\end{equation}
for all $t>0$ and $x\in \G$.
\end{thm}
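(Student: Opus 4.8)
The plan is to deduce the two-sided Gaussian bound \eqref{est_heat_above} from two geometric/functional inequalities that are available on a connected unimodular Lie group of polynomial growth: the volume doubling property and a scale-invariant $L^{2}$ Poincar\'e inequality on Carnot--Carath\'eodory balls. First I would record the doubling property $V(2r)\le C\,V(r)$ for all $r>0$: by left-invariance the ball volume $V(x,r)=V(r)$ is independent of the centre, and the two-regime behaviour $V(r)\simeq r^{d}$ for $r\le 1$ together with $V(r)\simeq r^{D}$ for $r\ge 1$ (with the exponents matching at $r=1$) yields doubling uniformly across all scales. Next I would invoke Jerison's Poincar\'e inequality for H\"ormander systems of left-invariant vector fields, which supplies the scale-invariant estimate
\[
\int_{B(x,r)}|\varphi-\varphi_{B}|^{2}\,d\mu \le C\,r^{2}\int_{B(x,Cr)}|\nabla\varphi|^{2}\,d\mu,
\]
where $\varphi_{B}$ denotes the mean of $\varphi$ over the ball.

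With these two ingredients in hand I would then apply the Grigor'yan--Saloff-Coste characterization: on a Dirichlet space satisfying volume doubling, the conjunction of doubling and the $L^{2}$ Poincar\'e inequality is equivalent to the scale-invariant parabolic Harnack inequality, and hence to the Li--Yau-type two-sided Gaussian estimate
\[
\frac{c}{V(x,\sqrt t)}\exp\Big(-C\frac{\rho(x,y)^{2}}{t}\Big)\le h_{t}(x,y)\le \frac{C}{V(x,\sqrt t)}\exp\Big(-c\frac{\rho(x,y)^{2}}{t}\Big).
\]
Specialising to $y=e$ and using $V(x,\sqrt t)=V(\sqrt t)$ recovers exactly \eqref{est_heat_above}.

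Alternatively, one can argue in the more hands-on style of \cite{VCS92}. For the upper bound I would first derive the on-diagonal estimate $\|h_{t}\|_{L^{\infty}(\G)}=h_{t}(e)\le C\,V(\sqrt t)^{-1}$ from a Nash-type (ultracontractivity) inequality, which in turn follows from the Sobolev/isoperimetric inequalities encoded in the polynomial volume growth; I would then upgrade this to the off-diagonal Gaussian factor by Davies' exponential weighting method applied to the semigroup $e^{-t\L}$. For the matching lower bound I would first establish the on-diagonal lower bound by combining the conservation of mass $\|h_{t}\|_{L^{1}(\G)}=1$ from \eqref{h_norm_1} with the just-proved Gaussian upper bound to localise a definite fraction of the heat mass in $B(\sqrt t)$, and then use symmetry together with the semigroup identity $h_{2t}(e)=\int_{\G}h_{t}(y)^{2}\,d\mu(y)$ and Cauchy--Schwarz to obtain $h_{2t}(e)\ge c\,V(\sqrt t)^{-1}$. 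Finally I would propagate this near-diagonal lower bound to the full Gaussian lower bound by an Aronson-type chaining argument, factoring the semigroup into roughly $n\simeq \rho(x)^{2}/t$ near-diagonal steps along a minimising geodesic from $e$ to $x$.

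The genuinely nontrivial input in either route is the scale-invariant Poincar\'e inequality in the sub-Riemannian setting: because the vector fields generate the tangent space only through iterated brackets, one cannot use a naive one-dimensional integration along straight lines, and the correct $L^{2}$ estimate (uniform in the radius and across both volume regimes) requires Jerison's argument. In the self-contained route the corresponding main obstacle is the Gaussian lower bound, where controlling the product of the near-diagonal lower bounds along the geodesic chain and matching it against the factor $\exp(-c\rho(x)^{2}/t)$ is the delicate step. Both difficulties ultimately reflect the same fact that a quantitative Harnack/Poincar\'e mechanism adapted to the Carnot--Carath\'eodory geometry is what drives the estimate.
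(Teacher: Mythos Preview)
The paper does not prove this theorem at all: it is quoted verbatim from \cite[VIII.2.9 Theorem]{VCS92} and used as a black box in the proofs of Theorems~\ref{main_thm2} and~\ref{main_thm3}. So there is no ``paper's own proof'' to compare against.

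That said, your two outlines are both standard and essentially correct as sketches. The first route (doubling $+$ Poincar\'e $\Rightarrow$ parabolic Harnack $\Rightarrow$ two-sided Gaussian bounds via the Grigor'yan--Saloff-Coste characterisation) is in fact invoked later in the paper itself, in Section~\ref{SEC:app}, via Theorem~\ref{two_side_est_heat_Sal}. Your second route is closer in spirit to what the cited source \cite{VCS92} actually does: the on-diagonal upper bound there comes from a Nash-type inequality tied to polynomial volume growth, the Gaussian off-diagonal factor from an exponential perturbation argument, and the lower bound from a chaining/Harnack mechanism. One small caveat on your first route: the doubling property does not follow merely from ``exponents matching at $r=1$'' (in general $d\neq D$); rather, doubling holds separately in each regime and the two regimes are bridged because both $V(r)$ and $r^{d}$, $r^{D}$ are comparable to positive constants near $r=1$, so the doubling constant is uniform across scales without any matching of exponents.
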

\begin{thm}\cite[VIII.4.3 Theorem]{VCS92}\label{est_heat_thm_exp} Let $\G$ be a connected unimodular Lie group with exponential volume growth. Then for every $n\geq d$ and $\varepsilon>0$, there exist $C_{n,\varepsilon}$ such that
\begin{equation}\label{est_heat_above_exp}
|h_{t}(x)|\leq C_{n,\varepsilon} t^{-\frac{n}{2}}\exp\left(-\frac{(\rho(x))^{2}}{(4+\varepsilon)t}\right),
\end{equation}
for all $t>0$ and $x\in \G$, where $d$ is the local dimension of $\G$. 
\end{thm}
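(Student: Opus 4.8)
The statement is the all-scales Gaussian upper bound of Varopoulos--Coulhon--Saloff-Coste for exponential growth groups. To reprove it I would combine a scale-dependent on-diagonal estimate with the Davies exponential-perturbation (Gaffney--Davies) method, exploiting throughout the left invariance of $\L=\sum_i X_i^2$ and the relation $h_{t}=h_{t/2}*h_{t/2}$.

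\emph{Step 1 (on-diagonal bound for every $n\ge d$).} First I would show that for each $n\ge d$ there is $C_n$ with $h_t(e)\le C_n t^{-n/2}$ for all $t>0$. For small times the local dimension governs the kernel: a local Nash/Sobolev inequality attached to $V(t)\simeq t^{d}$ on $(0,1)$ yields the ultracontractive bound $\|e^{-t\L}\|_{L^1\to L^\infty}=\|h_t\|_{L^\infty}\le C t^{-d/2}$, hence $h_t(e)\le C t^{-d/2}\le C t^{-n/2}$ for $0<t\le 1$ and $n\ge d$ (since $t^{-a}$ increases in $a$ when $t\le 1$). For large times exponential volume growth forces a super-polynomial on-diagonal decay: the lower bound $V(r)\ge c_1 e^{c_2 r}$ together with the Faber--Krahn/Nash comparison gives $h_t(e)\le e^{-c t^{1/3}}$ (or at worst $e^{-ct}$ in the non-amenable case), which is $\le C_n t^{-n/2}$ for every fixed $n$ once $t\ge 1$. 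Splicing the two regimes, and using $\|h_t\|_{L^1(\G)}=1$ from \eqref{h_norm_1} with the semigroup property, gives $\|e^{-t\L}\|_{L^1\to L^\infty}\le C_n t^{-n/2}$ for all $t>0$.

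\emph{Step 2 (Davies perturbation to produce the Gaussian factor).} Fix $x\in\G$ and a bounded smooth real $\psi$ with horizontal gradient $|\nabla\psi|\le 1$; for $\alpha\ge 0$ put $u(t)=e^{-t\L}f$. A direct energy computation, integrating by parts against $e^{2\alpha\psi}u$ and using $\L=\sum_i X_i^2$ together with $\sum_i|X_i\psi|^2=|\nabla\psi|^2\le 1$, yields $\tfrac{d}{dt}\|e^{\alpha\psi}u(t)\|_{L^2}^2\le C\alpha^2\|e^{\alpha\psi}u(t)\|_{L^2}^2$, whence the twisted bound $\|e^{\alpha\psi}e^{-t\L}e^{-\alpha\psi}\|_{L^2\to L^2}\le e^{C\alpha^2 t}$. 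Splitting $e^{-t\L}=e^{-\tfrac t2\L}e^{-\tfrac t2\L}$ and inserting the on-diagonal ultracontractivity of Step 1 on each factor upgrades this to a weighted $L^1\to L^\infty$ estimate, i.e.\ for the kernel $h_t(y^{-1}z)$ of $e^{-t\L}$,
\begin{equation*}
e^{\alpha(\psi(y)-\psi(z))}\,h_t(y^{-1}z)\le C_n t^{-n/2}e^{C\alpha^2 t}.
\end{equation*}
Choosing $\psi(\cdot)=\rho(\cdot,x)$ (the Carnot--Carath\'{e}odory distance to $x$, which is $1$-Lipschitz, so $|\nabla\psi|\le 1$ a.e.) and evaluating the kernel at $y=e$, $z=x$, so that $\psi(e)-\psi(x)=\rho(x)$, gives $h_t(x)\le C_n t^{-n/2}e^{-\alpha\rho(x)+C\alpha^2 t}$; optimizing by taking $\alpha=\rho(x)/(2Ct)$ produces $h_t(x)\le C_n t^{-n/2}\exp\left(-\rho(x)^2/(4Ct)\right)$.

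\emph{Step 3 (the constant $4+\varepsilon$, and the main obstacle).} The constant $C$ in the energy inequality is exactly $\|\nabla\psi\|_\infty^2$, so with $|\nabla\rho|\le 1$ one would formally reach the sharp exponent $1/4$. The loss to $1/(4+\varepsilon)$ is where the real work lies: $\rho(\cdot,x)$ is merely Lipschitz and must be replaced by a mollified $\psi_\delta$ with $\|\nabla\psi_\delta\|_\infty\le 1+\delta$ and $|\psi_\delta-\rho(\cdot,x)|\le\delta$, which degrades $C$ to $(1+\delta)^2$ and hence the Gaussian constant to $1/(4(1+\delta)^2)$; letting $\delta\to0$ covers every $\varepsilon>0$. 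I expect the main obstacle to be precisely this sharp-constant bookkeeping on the sub-Riemannian group: justifying the integration-by-parts and the identity $\sum_i|X_i\psi|^2=|\nabla\psi|^2$ for the regularized distance, controlling $|\nabla\rho|\le 1$ almost everywhere for the Carnot--Carath\'{e}odory metric, and keeping the $\delta$-dependent constants uniform so that the exponent can be pushed arbitrarily close to $1/4$. The super-polynomial large-time on-diagonal decay used in Step 1 (the $t^{1/3}$-phenomenon) is the other substantial input, but it is classical and may be quoted.
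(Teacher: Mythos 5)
The paper contains no proof of this statement: it is quoted verbatim from \cite[VIII.4.3 Theorem]{VCS92}, so your proposal can only be compared with the classical argument of Varopoulos--Coulhon--Saloff-Coste, which your outline does follow in broad strokes: an on-diagonal bound $\|h_t\|_{L^\infty(\G)}\le C_n t^{-n/2}$ for every $n\ge d$ (local dimension $d$ at small time, the super-polynomial decay $e^{-ct^{1/3}}$ coming from exponential volume growth at large time), followed by an exponential (Davies-type) perturbation to create the Gaussian factor. Your Step 1 is sound, and the inputs you quote there are the correct classical ones; note only that passing from $h_t(e)$ to $\|h_t\|_{L^\infty(\G)}$ uses unimodularity and the symmetry of the kernel via $\|h_{2t}\|_{L^\infty(\G)}\le\|h_t\|_{L^2(\G)}^2=h_{2t}(e)$, which you should make explicit.

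The genuine gap is in Step 2. In the factorization $e^{\alpha\psi}e^{-t\L}e^{-\alpha\psi}=\bigl(e^{\alpha\psi}e^{-\frac t2\L}e^{-\alpha\psi}\bigr)\bigl(e^{\alpha\psi}e^{-\frac t2\L}e^{-\alpha\psi}\bigr)$ both factors are themselves twisted operators, so you cannot ``insert the on-diagonal ultracontractivity of Step 1 on each factor'': Step 1 controls $\|e^{-s\L}\|_{L^1\to L^\infty}$, not $\|e^{\alpha\psi}e^{-s\L}e^{-\alpha\psi}\|_{L^2\to L^\infty}$, and attempting to bound the latter directly by Cauchy--Schwarz leads to integrals of the form $\int_{\G} e^{2\alpha\rho(x,y)}h_s(y^{-1}x)^2\,dy$, i.e.\ to the very Gaussian bound you are trying to prove; as written the argument is circular. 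The actual content of Davies' method is precisely to establish the \emph{twisted} ultracontractive estimate
\begin{equation*}
\|e^{\alpha\psi}e^{-t\L}e^{-\alpha\psi}\|_{L^1\to L^\infty}\le C_{n,\eps}\,t^{-\frac n2}\,e^{(1+\eps)\alpha^{2}t},
\end{equation*}
which is obtained not by splitting but by an iteration through a family of logarithmic Sobolev inequalities extracted from the Nash inequality underlying Step 1 (equivalently, by Grigor'yan's integrated argument converting an on-diagonal decay plus the $L^2$ Gaffney bound into a pointwise Gaussian bound). This also corrects your Step 3: the loss from $4$ to $4+\eps$ is produced chiefly in this twisted-ultracontractivity iteration --- even for an exactly $1$-Lipschitz smooth weight the method yields the factor $e^{(1+\eps)\alpha^2 t}$, and removing the $\eps$ is known to require polynomial corrections --- whereas the mollification of the Carnot--Carath\'eodory distance that you single out as the main obstacle is a comparatively routine additional loss. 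Once the displayed estimate is quoted or proved, the remainder of your outline (truncating and smoothing $\rho(\cdot,x)$ so that $\sum_i|X_i\psi|^2\le(1+\delta)^2$ a.e., evaluating at $y=e$, $z=x$, and optimizing $\alpha\simeq\rho(x)/t$) does assemble correctly into \eqref{est_heat_above_exp}.
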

In order to prove Parts (i) and (ii) of Theorem \ref{main_thm}, it is enough to prove the following result:
\begin{thm}\label{main_thm2} Let $\G$ be a connected unimodular Lie group with polynomial volume growth of order $D$ and $1<p<\infty$. Let $f:[0,\infty)\to [0,\infty)$ be a locally integrable function such that $f(u)\geq K_{2}u^{p}$ for some positive constant $K_{2}>0$. Let $u_{0}:\G\rightarrow [0,\infty]$ be measurable, such that $u_{0}>0$ in a set of positive measure.
\begin{enumerate}[label=(\roman*)]
\item If $p<p_{F}=1+2/D$, then there is no nonnegative measurable global solution $u:[0,\infty)\times\G\rightarrow [0,\infty]$ to the integral inequality
\begin{equation}\label{int_form_ineq}
u(t,x)\geq \int_{\G}h_{t}(y^{-1}x)u_{0}(y)dy+\int_{0}^{t}\int_{\G}h_{t-s}(y^{-1}x)f(u(s,y))dyds
\end{equation}
such that $u(t,x)<\infty$ for a.e. $(t,x)\in (0,\infty)\times \G$.
\item If $p=p_{F}=1+2/D$, then there is no nonnegative measurable global solution $u:[0,\infty)\times\G\rightarrow [0,\infty]$ to the integral equation
\begin{equation}\label{int_form}
u(t,x)=\int_{\G}h_{t}(y^{-1}x)u_{0}(y)dy+\int_{0}^{t}\int_{\G}h_{t-s}(y^{-1}x)f(u(s,y))dyds
\end{equation}
such that $u(t,x)<\infty$ for a.e. $(t,x)\in (0,\infty)\times \G$.
\end{enumerate}
\end{thm}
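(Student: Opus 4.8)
The plan is to collapse the two-variable integral (in)equality into a scalar Bernoulli-type inequality for a weighted spatial average and then invoke finite-time blow-up of the associated ODE. Write $(g\ast h_\tau)(x)=\int_{\G}g(y)h_\tau(y^{-1}x)\,dy$, so that $e^{-\tau\L}g=g\ast h_\tau$ and the semigroup law $h_a\ast h_b=h_{a+b}$ holds. Fix $A>0$ large and a base point $x_0\in\G$, and set
\[
m(t):=\int_{\G}u(t,x)\,h_{A-t}(x^{-1}x_0)\,dx,\qquad 0\le t<A.
\]
Inserting $f(u)\ge K_2u^p$ into \eqref{int_form_ineq} (or \eqref{int_form}), multiplying by the moving weight $h_{A-t}(x^{-1}x_0)$ and integrating in $x$ (Tonelli), the identity $\int_{\G}h_{t-s}(y^{-1}x)h_{A-t}(x^{-1}x_0)\,dx=h_{A-s}(y^{-1}x_0)$ collapses the two time parameters, the linear term turns into the constant $\beta_A:=(u_0\ast h_A)(x_0)$, and I obtain
\[
m(t)\ge\beta_A+K_2\int_0^t\Big(\int_{\G}u(s,y)^p\,h_{A-s}(y^{-1}x_0)\,dy\Big)ds.
\]
The key structural input is unimodularity: since $\|h_{A-s}\|_{L^1(\G)}=1$, the weight $h_{A-s}(y^{-1}x_0)\,dy$ is a probability measure, so Jensen's inequality for $r\mapsto r^p$ bounds the inner integral below by $m(s)^p$, whence
\[
m(t)\ge\beta_A+K_2\int_0^t m(s)^p\,ds,\qquad 0\le t<A.
\]

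Next I would play two competing facts about $J(t):=\beta_A+K_2\int_0^t m^p$ against each other. On one hand, evaluating the (in)equality at a final point $(A,x_0)$ with $u(A,x_0)<\infty$ (true for a.e.\ $x_0$) and using the same Jensen bound yields $\int_0^A m(s)^p\,ds\le K_2^{-1}u(A,x_0)<\infty$, so $J$ is absolutely continuous on $[0,A)$ with $J(0)=\beta_A$ and $J'=K_2m^p\ge K_2J^p$. Integrating this Bernoulli inequality for $J^{-(p-1)}$ forces $J$, hence $\int_0^{\cdot}m^p$, to become infinite no later than $T^\ast=\beta_A^{\,1-p}/(K_2(p-1))$. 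The two facts are compatible only if $T^\ast\ge A$; thus any choice of $A$ with $T^\ast<A$ produces a contradiction.

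It remains to size $\beta_A$. Since $u_0>0$ on a set of positive measure, the Gaussian lower bound in \eqref{est_heat_above} together with $V(\sqrt A)\simeq A^{D/2}$ gives $\beta_A\ge c_0A^{-D/2}$ for a suitable $x_0$ in a fixed ball and $A$ large, so $T^\ast\le CA^{D(p-1)/2}$. When $p<p_F=1+2/D$ we have $D(p-1)/2<1$, so $T^\ast=o(A)$ and $T^\ast<A$ once $A$ is large; this settles part (i).

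The delicate case is the critical one $p=p_F$, where $D(p-1)/2=1$ and the estimate degenerates to $T^\ast\le CA$ with $C$ possibly $\ge1$, so the single ODE comparison no longer beats the window $A$. Here I would instead bootstrap the pointwise lower bound directly. Starting from $u(t,x)\ge(u_0\ast h_t)(x)\gtrsim t^{-D/2}$ for $\rho(x)^2\lesssim t$, feeding this into the Duhamel term in \eqref{int_form} and integrating over dyadic time scales produces, precisely at criticality, the logarithmic gain $\int_1^{t/2}s^{-D(p-1)/2}\,ds=\int_1^{t/2}s^{-1}\,ds\simeq\log t$, giving $u(t,x_0)\gtrsim t^{-D/2}\log t$; iterating raises the exponent, $u(t,x_0)\gtrsim c_k\,t^{-D/2}(\log t)^{a_k}$ with $a_k\to\infty$. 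The main obstacle is the bookkeeping: the constants $c_k$ deteriorate along the iteration, and one must show that the growth of $a_k$ nonetheless wins for $t$ beyond a fixed (double-exponential) threshold, forcing $u(t,x_0)=+\infty$ on a set of positive measure and contradicting the a.e.\ finiteness of $u$. This scale-by-scale argument, which genuinely exploits the equation \eqref{int_form}, is the heart of part (ii).
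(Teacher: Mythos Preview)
Your argument for part~(i) is essentially the paper's own: your weighted average $m(t)=\bigl(e^{-(A-t)\L}u(t)\bigr)(x_0)$ is exactly the function $\phi(t)$ in the paper's Lemma~2.3, and your Jensen/semigroup reduction to the Bernoulli inequality $J'\ge K_2J^{p}$ is the same computation that yields the a~priori bound $t^{1/(p-1)}\|e^{-t\L}u_0\|_{L^\infty}\le A_p$ there. The contradiction via the Gaussian lower bound and $D(p-1)/2<1$ is then identical.

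For part~(ii) the routes diverge. The paper does \emph{not} iterate pointwise lower bounds. Instead it first uses the equation (not just the inequality) to shift the time origin and apply the Bernoulli bound with $u(\tau)$ in place of $u_0$; combined with the heat-kernel lower bound this gives a uniform a~priori control $\|u(\tau)\|_{L^1(\G)}\le C$ for a.e.~$\tau$. It then shows $u(s+t_0)\ge C_3\,h_{s+1}$ for a suitable $t_0$, computes $\|h_{s+1}^{\,p}\|_{L^1(\G)}\gtrsim (s+1)^{-1}$ at criticality, and feeds this into the $L^1$-norm of the Duhamel formula to obtain $\|u(t+t_0)\|_{L^1}\gtrsim\int_1^t(s+1)^{-1}\,ds\to\infty$, contradicting the $L^1$ bound. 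This is a single clean computation, with no iteration and no deteriorating constants.

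Your bootstrap $u\gtrsim c_k\,t^{-D/2}(\log t)^{a_k}$ is the classical alternative and can be made to work, but your proposal stops precisely at the hard step: you acknowledge that the $c_k$ decay and must be beaten by the growing $a_k$, yet give no mechanism for this. To close it you would need (a) the bound to hold uniformly on a moving ball $\{\rho(x)^2\lesssim t\}$, not just at $x_0$, so that the next Duhamel step sees enough mass, and (b) a quantitative recursion, say $a_{k+1}=pa_k+1$ and $c_{k+1}\ge c\,c_k^{\,p}/a_{k+1}$, together with the (standard but nontrivial) verification that for $t$ beyond a double-exponential threshold the product $c_k(\log t)^{a_k}$ diverges. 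One further remark: contrary to your last sentence, this iteration uses only the integral \emph{inequality} $u\ge u_0\ast h_t+K_2\int_0^t u^p\ast h_{t-s}\,ds$; it never needs equality. It is the paper's $L^1$ argument, through the time-shift in its Corollary~2.4, that genuinely exploits the equation.
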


Now let us show the following lemma on sub-Riemannian manifold $M$, which we will use in the proof of Theorem \ref{main_thm2} when $M$ is a connected unimodular Lie group:
\begin{lem}\label{lem1} Let $M$ be a sub-Riemannian manifold with $\int_{M}h_{t}(x,y)d\mu_{y}\leq1$ for all $x\in M$ and $t>0$. Let $1<p<\infty$ and $T>0$. Let $f:[0,\infty)\to [0,\infty)$ be a locally integrable function such that $f(u)\geq K_{2}u^{p}$ for some positive constant $K_{2}>0$. Let $\vartheta_{0}:M\rightarrow [0,\infty]$ and $\vartheta:[0,T]\times M \rightarrow [0,\infty]$ be measurable and satisfy
\begin{equation}\label{lem_for1}
\vartheta(t)\geq e^{t\L_{M}}\vartheta_{0}+\int_{0}^{t}e^{(t-s)\L_{M}}f(\vartheta(s))ds
\end{equation}
a.e. in $Q^{M}_{T}:=[0,T]\times M$. Assume that $\vartheta(t,x)<\infty$ for a.e. $(t,x)\in Q^{M}_{T}$. Then we have
\begin{equation}\label{lem_for2}
t^{\frac{1}{p-1}}\|e^{t\L_{M}}\vartheta_{0}\|_{L^{\infty}(M)}\leq A_{p}:=(K_{2}(p-1))^{-\frac{1}{p-1}}
\end{equation}
for all $t\in (0,T]$.
\end{lem}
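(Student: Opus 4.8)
The plan is to extract from the integral inequality \eqref{lem_for1} a self-improving bound on the $L^\infty$-norm $\phi(t):=\|e^{-t\L_M}\vartheta_0\|_{L^\infty(M)}$, and then to show that if this quantity is ever too large, the iterated inequality forces $\vartheta(t,x)=+\infty$ on a set of positive measure, contradicting the finiteness assumption. Since the $\vartheta(s)$ on the right-hand side are nonnegative, a first immediate consequence of \eqref{lem_for1} is the pointwise lower bound $\vartheta(t)\geq e^{-t\L_M}\vartheta_0$ a.e. More importantly, dropping the first term and using $f(u)\geq K_2 u^p$ together with the positivity of the heat semigroup, I would derive the \emph{closed} integral inequality
\begin{equation*}
\vartheta(t)\geq K_2\int_0^t e^{-(t-s)\L_M}\bigl(\vartheta(s)\bigr)^p\,ds
\end{equation*}
a.e. in $Q^M_T$, which no longer references $\vartheta_0$ except through the already-established seed $\vartheta(s)\geq e^{-s\L_M}\vartheta_0$.

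The core of the argument is an iteration. First I would feed the seed bound back in: using Jensen's inequality for the probability-like operator $e^{-(t-s)\L_M}$ (legitimate because $\|h_t\|_{L^1(M)}\leq 1$ and the kernel is positive), one estimates $e^{-(t-s)\L_M}(e^{-s\L_M}\vartheta_0)^p$ from below in terms of $(e^{-t\L_M}\vartheta_0)\cdot\phi(s)^{p-1}$, or more directly one controls the spatial sup-norm of each successive lower bound. The mechanism is that each application of the inequality raises the effective exponent and produces a growing numerical constant; tracking the constant yields, after $n$ steps, a lower bound of the form $\vartheta(t,x)\geq e^{-t\L_M}\vartheta_0(x)\cdot c_n(t)$, where $c_n(t)$ is built from nested time integrals of powers of $\phi$. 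The key algebraic observation is that if $t^{1/(p-1)}\phi(t)>A_p=(K_2(p-1))^{-1/(p-1)}$ held for some $t$, then by the monotone/semigroup behaviour of $\phi$ (note $\phi$ is nonincreasing since $e^{-s\L_M}$ is a contraction on $L^\infty$, so $t^{1/(p-1)}\phi(t)$ controls nearby times) the constants $c_n(t)$ would diverge as $n\to\infty$. This forces $\vartheta(t,x)=+\infty$ wherever $e^{-t\L_M}\vartheta_0(x)>0$, which is a set of positive measure since $\vartheta_0>0$ on a set of positive measure and the heat kernel is strictly positive — contradicting $\vartheta(t,\cdot)<\infty$ a.e.

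The choice of the threshold $A_p$ is not accidental: the constant $(K_2(p-1))^{-1/(p-1)}$ is exactly the critical value at which the scalar ODE comparison $y'=K_2 y^p$, $y(t)\sim t^{-1/(p-1)}$, has its self-similar solution, so the iteration converges precisely up to this bound and diverges beyond it. Concretely, I expect the cleanest route is to test \eqref{lem_for1} against the strictly positive heat kernel or to evaluate at the point(s) realizing the sup, reducing the PDE-level inequality to the scalar Bernoulli-type recursion $\phi(t)\geq K_2\int_0^t \phi(s)^p\,ds$ after using $\|e^{-(t-s)\L_M}g\|_{L^\infty}\leq\|g\|_{L^\infty}$ in reverse via duality; the scalar inequality then integrates to give exactly \eqref{lem_for2}. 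The main obstacle will be making the Jensen/semigroup manipulation rigorous when $\vartheta_0$ may take the value $+\infty$ and when passing powers through $e^{-(t-s)\L_M}$: I would handle this by truncation (replacing $\vartheta_0$ by $\min(\vartheta_0,N)$, proving the bound for the truncation, and letting $N\to\infty$ by monotone convergence), and by invoking the strict positivity and $L^1$-normalization of $h_t$ from the Introduction to justify each lower estimate. Measurability and the a.e. qualifications are routine once the pointwise inequalities are in place.
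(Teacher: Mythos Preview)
Your proposal has the right ingredients (positivity of the heat kernel, Jensen's inequality via $\|h_t\|_{L^1}\le 1$, and comparison with the Bernoulli ODE $y'=K_2y^p$), but both concrete routes you sketch are flawed as written.

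For the iteration route: feeding the seed $\vartheta(s)\ge e^{-s\L_M}\vartheta_0=:w(s)$ back into the integral inequality and using Jensen gives, after one step, $\vartheta(t,x)\ge w(t,x)\bigl(1+K_2\,t\,w(t,x)^{p-1}\bigr)$. The multiplicative factor depends on $x$ through $w(t,x)$, so you do \emph{not} obtain a bound of the form $w(t,x)\cdot c_1(t)$ with a spatially constant $c_1$. Further iteration raises the exponent on $w$ (you get terms involving $w(t,x)^{p^n}$), so the structure $\vartheta\ge w\cdot c_n(t)$ you announce never materialises, and the bookkeeping needed to extract the sharp constant $A_p$ from the nested integrals is considerably more delicate than suggested. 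For the ``cleanest route'': with your definition $\phi(t)=\|e^{-t\L_M}\vartheta_0\|_{L^\infty}$, there is no legitimate way to derive $\phi(t)\ge K_2\int_0^t\phi(s)^p\,ds$ from \eqref{lem_for1}. The contraction $\|e^{-(t-s)\L_M}g\|_{L^\infty}\le\|g\|_{L^\infty}$ goes the wrong way for a lower bound, and ``in reverse via duality'' does not rescue it.

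The paper avoids both difficulties with a single manoeuvre you almost reach but do not carry out: fix a terminal time $\tau$ with $\vartheta(\tau,\cdot)<\infty$ a.e., apply $e^{-(\tau-t)\L_M}$ to \eqref{lem_for1}, and use Jensen to obtain, for each fixed $x$, a closed scalar inequality
\[
g(t):=e^{-\tau\L_M}\vartheta_0(x)+K_2\int_0^t\bigl(e^{-(\tau-s)\L_M}\vartheta(s)\bigr)^p(x)\,ds
\]
with $g'(t)\ge K_2\,g(t)^p$ and $g(0)=(e^{-\tau\L_M}\vartheta_0)(x)$. Integrating $[g^{1-p}]'\le -K_2(p-1)$ over $[0,\tau]$ gives \eqref{lem_for2} directly, with no iteration and no truncation of $\vartheta_0$ needed. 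The key point you are missing is that pushing \emph{forward} to a common time $\tau$ via the semigroup turns the spatially coupled inequality into a genuine pointwise ODE.
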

\begin{proof}[Proof of Lemma \ref{lem1}] Note that by virtue of Fubini's theorem for nonnegative measurable functions we will use operations such as interchange of integrals and moving of $e^{-t\L_{M}}$ inside integrals in the proof of this lemma. We notice that
\begin{equation}\label{proof_lem_for1}
e^{t\L_{M}}F=e^{(t-s)\L_{M}}e^{s\L_{M}}F
\end{equation}
for all $0<s<t$ and any measurable $F:M\rightarrow [0,\infty]$. Also, we obtain from Jensen's inequality and $\int_{M}h_{t}(x,y)d\mu_{y}\leq1$ for all $x\in M$ and $t>0$ that
\begin{equation}\label{proof_lem_for2}
e^{t\L_{M}}F^{p}\geq (e^{t\L_{M}}F)^{p}
\end{equation}
for all measurable $F:M\rightarrow [0,\infty]$. Now, by redefining $u$ on a null set, one may assume that \eqref{lem_for1} actually holds everywhere in $(0,T)\times M$. By assumption, we have $\vartheta(\tau,\cdot)<\infty$ a.e. in $M$ for a.e. $\tau\in (0,T)$. Let us fix such $\tau$ and denote $M_{\tau}:=\{x\in  M:\vartheta(\tau,x)<\infty\}$. Then, \eqref{lem_for1} with $f(u)\geq K_{2}u^{p}$, \eqref{proof_lem_for1} and \eqref{proof_lem_for2} imply for $t\in [0,\tau]$ that
\begin{equation}\label{proof_lem_for3}
\begin{split}
e^{(\tau-t)\L_{M}}\vartheta(t)&\geq e^{\tau \L_{M}}\vartheta_{0}+K_{2}\int_{0}^{t}e^{(\tau-s)\L_{M}}(\vartheta(s))^{p}ds
\\&\geq e^{\tau \L_{M}}\vartheta_{0}+K_{2}\int_{0}^{t}(e^{(\tau-s)\L_{M}}\vartheta(s))^{p}ds=:g(t,\cdot),
\end{split}
\end{equation}
where we have also used that the heat kernel is positive (see Introduction for the references), so that the integration with it preserves inequalities between non-negative functions.
Here, from the second inequality in \eqref{proof_lem_for3} noting \eqref{lem_for1} and $f(u)\geq K_{2}u^{p}$, we get
\begin{equation}\label{proof_lem_for4}
\begin{split}
g(\tau, \cdot)&\leq e^{\tau\L_{M}}\vartheta_{0}+K_{2}\int_{0}^{\tau}e^{(\tau-s)\L_{M}}(\vartheta(s))^{p}ds
\\& \leq e^{\tau\L_{M}}\vartheta_{0}+\int_{0}^{\tau}e^{(\tau-s)\L_{M}}f(\vartheta(s))ds
\\& \leq  \vartheta(\tau,\cdot),
\end{split}
\end{equation}
and so $g(t,x)<\infty$ for all $(t,x)\in M_{\tau}\times[0,\tau]$. Fixing $x\in M_{\tau}$, we see that the function $\phi(t):=g(t,x)$ is absolutely continuous on $[0,\tau]$, and that \eqref{proof_lem_for3} implies
\begin{equation}\label{proof_lem_for5}
\phi'(t)=K_{2}(e^{(\tau-t)\L_{M}}\vartheta(t))^{p}(x)\geq K_{2}(\phi(t))^{p}
\end{equation}
for a.e. $t\in[0,\tau]$. For fixed $x\in M_{\tau}$ we have $\phi(t)=g(t,x)\geq (e^{\tau \L_{M}}\vartheta_{0})(x)>0$ by the definition of the function $g$ in \eqref{proof_lem_for3}, therefore, we can rewrite \eqref{proof_lem_for5} as $[\phi^{1-p}]'\leq -K_{2}(p-1)$. By integrating this inequality over $[0,\tau]$, we obtain
\begin{equation}\label{proof_lem_for6}
[(e^{\tau\L_{M}}\vartheta_{0})(x)]^{1-p}=\phi^{1-p}(0)\geq \phi^{1-p}(\tau)+K_{2}(p-1)\tau\geq K_{2}(p-1)\tau,
\end{equation}
which implies $\tau^{1/(p-1)}\|e^{\tau\L_{M}}\vartheta_{0}\|_{L^{\infty}(M)}\leq (K_{2}(p-1))^{-1/(p-1)}$. In particular, this means that $e^{t\L_{M}}\vartheta_{0}\in L^{\infty}( M)$ for a.e. $t\in (0,T)$. Since we know that $t\mapsto\|e^{t\L_{M}}\upsilon\|_{L^{\infty}(M)}$ is continuous for $\upsilon\in L^{\infty}(M)$ and $t>0$, then \eqref{proof_lem_for1} yields that the function
$$t\mapsto t^{\frac{1}{p-1}}\|e^{t\L_{M}}\vartheta_{0}\|_{L^{\infty}(M)}$$
is continuous in $(0,T)$, hence \eqref{lem_for2}.
\end{proof}
\begin{cor}\label{cor1} Let $M$ be a sub-Riemannian manifold with $\int_{M}h_{t}(x,y)d\mu_{y}\leq1$ for all $x\in M$ and $t>0$. Let $1<p<\infty$ and $T>0$. Let $f:[0,\infty)\to [0,\infty)$ be a locally integrable function such that $f(u)\geq K_{2}u^{p}$ for some positive constant $K_{2}>0$. Let $\vartheta_{0}: M\rightarrow [0,\infty]$ and $\vartheta:[0,T]\times M\rightarrow [0,\infty]$ be measurable and satisfy
$$
\vartheta(t)= e^{t\L_{M}}\vartheta_{0}+\int_{0}^{t}e^{(t-s)\L_{M}}f(\vartheta(s))ds
$$
a.e. in $Q^{M}_{T}=[0,T]\times M$.
Then we have
$$\|t^{\frac{1}{p-1}}e^{t\L_{M}}\vartheta(\tau)\|_{L^{\infty}( M)}\leq A_{p}:=(K_{2}(p-1))^{-\frac{1}{p-1}}$$
for all $t\in (0,T-\tau]$ and a.e. $\tau\in (0,T)$.
\end{cor}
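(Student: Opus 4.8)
The plan is to reduce the equation version stated here to the inequality version already settled in Lemma~\ref{lem1}, by means of a shift in the time variable together with the semigroup property of $e^{-t\L_M}$. Fix $\tau\in(0,T)$ for which $\vartheta(\tau,\cdot)<\infty$ a.e.\ in $M$; this holds for a.e.\ such $\tau$. I would then set $\tilde\vartheta(t):=\vartheta(t+\tau)$ for $t\in[0,T-\tau]$ and take $\tilde\vartheta_0:=\vartheta(\tau,\cdot)$ as a new initial datum. The goal is to show that $\tilde\vartheta$ satisfies on $[0,T-\tau]$ the same Duhamel identity with $\tilde\vartheta_0$ in place of $\vartheta_0$, after which the claim is immediate from Lemma~\ref{lem1}.

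Concretely, I would write the integral equation at time $t+\tau$,
\begin{equation*}
\vartheta(t+\tau)=e^{-(t+\tau)\L_M}\vartheta_0+\int_0^{t+\tau}e^{-(t+\tau-s)\L_M}f(\vartheta(s))\,ds,
\end{equation*}
and split the integral as $\int_0^\tau+\int_\tau^{t+\tau}$. Using the semigroup identity $e^{-(t+\tau)\L_M}=e^{-t\L_M}e^{-\tau\L_M}$ (valid for nonnegative measurable functions by Fubini, exactly as in \eqref{proof_lem_for1}), I factor $e^{-t\L_M}$ out of the first term and of the $\int_0^\tau$ piece, and recognise the resulting bracket as precisely the right-hand side of the integral equation evaluated at $\tau$, that is, as $\vartheta(\tau)=\tilde\vartheta_0$. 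In the remaining integral $\int_\tau^{t+\tau}$ the change of variable $s\mapsto s+\tau$ produces $\int_0^t e^{-(t-s)\L_M}f(\tilde\vartheta(s))\,ds$. This yields
\begin{equation*}
\tilde\vartheta(t)=e^{-t\L_M}\tilde\vartheta_0+\int_0^t e^{-(t-s)\L_M}f(\tilde\vartheta(s))\,ds,\qquad t\in[0,T-\tau],
\end{equation*}
so in particular $\tilde\vartheta$ satisfies the inequality \eqref{lem_for1} with initial datum $\tilde\vartheta_0$. Since $\tilde\vartheta(t,x)=\vartheta(t+\tau,x)<\infty$ a.e.\ on $Q^M_{T-\tau}$, all the hypotheses of Lemma~\ref{lem1} are in force.

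Applying Lemma~\ref{lem1} to $\tilde\vartheta$ with initial datum $\tilde\vartheta_0=\vartheta(\tau)$ then gives
\begin{equation*}
t^{\frac{1}{p-1}}\|e^{-t\L_M}\vartheta(\tau)\|_{L^\infty(M)}\le A_p=(K_2(p-1))^{-\frac{1}{p-1}}
\end{equation*}
for all $t\in(0,T-\tau]$, which is the asserted bound, since the scalar factor $t^{1/(p-1)}$ may be absorbed inside the $L^\infty(M)$-norm. The only steps requiring care are the justification of the semigroup identity and of the interchange of $e^{-t\L_M}$ with the $s$-integral for functions valued in $[0,\infty]$; as in the proof of Lemma~\ref{lem1}, these are handled by Fubini's theorem for nonnegative measurable functions together with the positivity of the heat kernel, so no integrability beyond the standing hypotheses is needed. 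I expect the genuinely substantive point to be the algebraic observation that the $\tau$-truncated part of the Duhamel formula reassembles exactly into $\vartheta(\tau)$.
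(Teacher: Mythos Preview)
Your proof is correct and follows essentially the same approach as the paper: define $\tilde\vartheta(t)=\vartheta(t+\tau)$, split the Duhamel integral at $\tau$, use the semigroup property and Fubini to regroup terms into $e^{-t\L_M}\vartheta(\tau)$, and then apply Lemma~\ref{lem1} with $\vartheta_0$ replaced by $\vartheta(\tau)$ and $T$ by $T-\tau$. The justifications you give (Fubini for nonnegative functions, positivity of the heat kernel) are exactly those invoked in the paper.
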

\begin{proof}[Proof of Corollary \ref{cor1}] Denote $\widetilde{\vartheta}(t):=\vartheta(t+\tau)$. Then, by \eqref{proof_lem_for1} and Fubini's theorem we have for a.e. $\tau\in (0,T)$ and a.e. $t\in (\tau,T)$ that
\begin{equation*}
\begin{split}
\widetilde{\vartheta}(t)&= e^{(t+\tau)\L_{M}}\vartheta_{0}+\int_{0}^{t+\tau}e^{(t+\tau-s)\L_{M}}f(\vartheta(s))ds
\\&=e^{t\L_{M}}e^{\tau\L_{M}}\vartheta_{0}+\int_{0}^{\tau}e^{t\L_{M}}e^{(\tau-s)\L_{M}}f(\vartheta(s))ds+
\int_{\tau}^{t+\tau}e^{(t+\tau-s)\L_{M}}f(\vartheta(s))ds\\
&=e^{t\L_{M}}\left(e^{\tau\L_{M}}\vartheta_{0}+\int_{0}^{\tau}e^{(\tau-s)\L_{M}}f(\vartheta(s))ds\right)+\int_{0}^{t}e^{(t-s)\L_{M}}f(\widetilde{\vartheta}
(s))ds\\&
=e^{t\L_{M}}\vartheta({\tau})+\int_{0}^{t}e^{(t-s)\L_{M}}f(\widetilde{\vartheta}(s))ds.
\end{split}
\end{equation*}
Hence, Lemma \ref{lem1} with $\vartheta_{0}$ replaced by $\vartheta(\tau)$ and $T$ replaced by $T-\tau$ for a.e. $\tau\in (0,T)$ completes the proof.
\end{proof}
Now we are ready to prove Theorem \ref{main_thm2}.
\begin{proof}[Proof of Theorem \ref{main_thm2}] i) By way of contradiction let us assume that there exists a global solution of \eqref{int_form_ineq}. Then, by Lemma \ref{lem1} when $M=\G$ (where we have $\|h_{t}\|_{L^{1}(\G)}=1$ by \eqref{h_norm_1}) we get
\begin{equation}\label{proof_eq1}
|u_{0}\ast t^{\frac{1}{p-1}}h_{t}|\leq A_{p}.
\end{equation}
For $t>1$ from Theorem \ref{est_heat_thm} we get that
$$h_{t}(x)\geq C_{1}t^{-\frac{D}{2}}\exp\left(-C_{2}\frac{(\rho(x))^{2}}{t}\right)$$
for some positive constants $C_{1}$ and $C_{2}$. Then, using this for a given measurable function $v:\G\rightarrow [0,\infty]$, we have
\begin{equation}\label{proof_eq2}
\lim_{t\rightarrow \infty}v\ast t^{\frac{D}{2}}h_{t}\geq C\|v\|_{L^{1}(\G)}
\end{equation}
pointwise in $\G$, where $\|v\|_{L^{1}(\G)}:=\infty$ if $v\notin L^{1}(\G)$. In the case $p<p_{F}$, \eqref{proof_eq1} gives $t^{D/2}\|u_{0}\ast h_{t} \|_{L^{\infty}(\G)}\rightarrow 0$ as $t\rightarrow \infty$ which contradicts \eqref{proof_eq2} with $v=u_{0}$.

ii) In the case $p=p_{F}$, again by way of contradiction we assume that there exists a global solution of \eqref{int_form}. We redefine $u$ on a null set, then assuming that \eqref{int_form} actually holds everywhere in $(0,\infty)\times \G$, we get
\begin{equation}\label{int_form_2}
u(t+t_{0})= u(t_{0})\ast h_{t} +\int_{0}^{t}f(u(s+t_{0}))\ast h_{t-s}ds,
\end{equation}
for all $t,t_{0}>0$. Note that Corollary \ref{cor1} when $M=\G$ (where we have $\|h_{t}\|_{L^{1}(\G)}=1$ by \eqref{h_norm_1}) and \eqref{proof_eq2} guarantee the existence of positive constant $C$ such that
\begin{equation}\label{proof_eq3}
\|u(\tau)\|_{L^{1}(\G)}\leq C
\end{equation}
for a.e. $\tau>0$. On the other hand, since the Carnot-Carath\'{e}odory distance satisfies the triangle inequality, we have $(\rho(x,z))^{2}/(4t)\leq ((\rho(x))^{2}+(\rho(z))^{2})/(2t)$, and by Theorem \ref{est_heat_thm} we obtain
\begin{equation*}
u(t,x)\geq (u_{0}\ast h_{t})(x)\geq C_{1}t^{-\frac{D}{2}}e^{-2C_{2}\frac{(\rho(x))^{2}}{t}}
\int_{\G}e^{-2C_{2}\frac{(\rho(z))^{2}}{t}}u_{0}(z)dz
\end{equation*}
for $t\geq 1$, and
\begin{equation*}
u(t,x)\geq (u_{0}\ast h_{t})(x)\geq C_{1}t^{-\frac{d}{2}}e^{-2C_{2}\frac{(\rho(x))^{2}}{t}}
\int_{\G}e^{-2C_{2}\frac{(\rho(z))^{2}}{t}}u_{0}(z)dz
\end{equation*}
for $0<t<1$, which imply with \eqref{est_heat_above} that
$$u(2C_{2}/C_{1},x)\geq C_{3}h_{1},\;x\in \G.$$
Using this, and the property $h_{t+s}=h_{t}\ast h_{s}$ for $s,t>0$ and \eqref{int_form_2}, we deduce that
\begin{equation}\label{proof_eq4}
u(s+2C_{2}/C_{1})\geq u(2C_{2}/C_{1})\ast h_{s}\geq C_{3}h_{1}\ast h_{s}=C_{3}h_{s+1}, \;s>0.
\end{equation}
Now, Theorem \ref{est_heat_thm}, $(p-1)D/2=1$ and \eqref{h_norm_1} imply that
\begin{equation}\label{proof_eq5}
\begin{split}
\|h^{p}_{s+1}\|_{L^{1}(\G)}&\geq C_{1}^{p}(s+1)^{-pD/2}\left(\frac{s+1}{p}\right)^{D/2}\left(\frac{s+1}{p}\right)^{-D/2}
\int_{\G}e^{-C_{2}p\frac{(\rho(x))^{2}}{s+1}}dx\\&
\geq C_{4}(s+1)^{-1}\|h_{\frac{C_{1}(s+1)}{C_{2}p}}\|_{L^{1}(\G)}= C_{5}(s+1)^{-1}
\end{split}
\end{equation}
for all $s>1$ and some $C_{4}, C_{5}>0$. As in \cite[Proposition 48.4]{QP07}, one can note that from \eqref{h_norm_1} and Fubini's theorem we have $e^{t\L}\psi \geq0$ and 
$$\|e^{t\L}\psi\|_{L^{1}(\G)}=\int_{\G}\int_{\G}\psi(\zeta)h_{t}(\zeta^{-1}\eta)d\zeta d\eta=\int_{\G}\psi(\zeta)\left(\int_{\G}h_{t}(\zeta^{-1}\eta) d\eta\right)d\zeta=\|\psi\|_{L^{1}(\G)}$$
for any $\psi\geq0$ . This calculation, \eqref{proof_eq5}, \eqref{int_form_2} with $t_{0}=2C_{2}/C_{1}$ and \eqref{proof_eq4} imply
\begin{equation*}
\begin{split}
\|u(t+2C_{2}/C_{1})\|_{L^{1}(\G)}&\geq \int_{0}^{t}\|f(u(s+2C_{2}/C_{1}))\ast h_{t-s}\|_{L^{1}(\G)}ds\\
&\geq K_{2}\int_{0}^{t}\|u^{p}(s+2C_{2}/C_{1})\ast h_{t-s}\|_{L^{1}(\G)}ds
\\& \geq K_{2}\int_{0}^{t}\|(C_{3}h_{s+1})^{p}\ast h_{t-s}\|_{L^{1}(\G)}ds\\&
= K_{2}C_{3}^{p}\int_{0}^{t}\|h^{p}_{s+1}\|_{L^{1}(\G)}ds\\&\geq K_{2}C_{3}^{p}\int_{1}^{t}\|h^{p}_{s+1}\|_{L^{1}(\G)}ds\\&
\geq K_{2}C_{3}^{p}C_{5}\int_{1}^{t}(s+1)^{-1}ds\rightarrow \infty
\end{split}
\end{equation*}
as $t\rightarrow \infty$, which contradicts \eqref{proof_eq3}.
\end{proof}
Now we prove Theorem \ref{main_thm3}.
\begin{proof}[Proof of Theorem \ref{main_thm3}] To prove Theorem \ref{main_thm3} we see by Theorem \ref{lem2} that it is enough to show \eqref{lem2_for1}. Since by the assumption \eqref{proof_part2_for1} we have $u_{0}(x)\leq \varepsilon h_{\gamma}(x)$ for all $x\in \G$, and noting that $\|h_{t}\|_{L^{1}(\G)}=1, \;\forall t>0$, and Theorem \ref{est_heat_thm}, we obtain
\begin{equation*}
\begin{split}
\int_{0}^{\infty}\|e^{s\L }u_{0}\|_{L^{\infty}(\G)}^{p-1}ds&=
\int_{0}^{\infty}\|u_{0}\ast h_{s}\|_{L^{\infty}(\G)}^{p-1}ds
\\&\leq\varepsilon\int_{0}^{\infty}\|h_{\gamma}\ast h_{s}\|_{L^{\infty}(\G)}^{p-1}ds\\&
=\varepsilon\int_{0}^{\infty}\|h_{s+\gamma}\|_{L^{\infty}(\G)}^{p-1}ds\\&
=\varepsilon\int_{\gamma}^{\infty}\|h_{s}\|_{L^{\infty}(\G)}^{p-1}ds\\&
< C \varepsilon \left(\int_{\min(\gamma,1)}^{1}s^{-\frac{d(p-1)}{2}}ds
+\int_{1}^{\infty}s^{-\frac{D(p-1)}{2}}ds\right)\\&
< \frac{1}{K_{1}(p-1)}
\end{split}
\end{equation*}
for small $\varepsilon>0$ since $D(p-1)/2>1$ and $\gamma>0$.
\end{proof}
Now let us prove Theorem \ref{main_thm3_exp}.
\begin{proof}[Proof of Theorem \ref{main_thm3_exp}] Actually, the proof of this theorem is similar to the proof of Theorem \ref{main_thm3}, we use Theorem \ref{est_heat_thm_exp} instead of Theorem \ref{est_heat_thm}.

By \eqref{proof_part2_for1_exp}, $\|h_{t}\|_{L^{1}(\G)}=1, \;\forall t>0$, and Theorem \ref{est_heat_thm_exp}, one has
\begin{equation}\label{exp_est_case2}
\begin{split}
\int_{0}^{\infty}\|e^{s\L }u_{0}\|_{L^{\infty}(\G)}^{p-1}ds&=
\int_{0}^{\infty}\|u_{0}\ast h_{s}\|_{L^{\infty}(\G)}^{p-1}ds
\\&\leq\varepsilon\int_{0}^{\infty}\|h_{\gamma}\ast h_{s}\|_{L^{\infty}(\G)}^{p-1}ds\\&
=\varepsilon\int_{0}^{\infty}\|h_{s+\gamma}\|_{L^{\infty}(\G)}^{p-1}ds\\&
< C \varepsilon \int_{0}^{\infty}(s+\gamma)^{-\frac{n(p-1)}{2}}ds
\end{split}
\end{equation}
for small $\varepsilon>0$ and for every $n\geq d$. So, letting $n\rightarrow \infty$ we observe that in this case the condition \eqref{lem2_for1} holds for $1<p<\infty$. 

Thus, Theorem \ref{lem2} concludes the proof.
\end{proof}
\section{The global well-posedness on sub-Riemannian manifolds}
\label{SEC:app}
In this section we discuss the obtained results on unimodular groups in more general settings, namely, on sub-Riemannian manifolds $M$. To have an analogue of Part (i) of Theorem \ref{main_thm} on $M$, we need to assume that the following estimate for the heat kernel from below holds on $M$ (see the proof of Part (i) of Theorem \ref{main_thm2}): assume that there exist constants $C_{1},C_{2}>0$ and $a\geq 0$ such that
\begin{equation}\label{est_heat_below_ineq_gener}
h_{t}(x,y)\geq C_{1} t^{-\frac{a}{2}}\exp\left(-C_{2}\frac{(\rho(x,y))^{2}}{t}\right),
\end{equation}
for all $t>1$ and $x,y\in M$. Therefore, we have
\begin{thm}\label{main_thm_gener1} Assume that \eqref{est_heat_below_ineq_gener} holds on $M$ for some $a \geq 0$ and that $\int_{M}h_{t}(x,y)d\mu_{y}\leq1$ for all $x\in M$ and $t>0$. Let $1<p<1+2/a$. Let $f:[0,\infty)\to [0,\infty)$ be a locally integrable function such that $f(u)\geq K_{2}u^{p}$ for some positive constant $K_{2}>0$. Then the differential inequality
\begin{equation}\label{heat_eq_thm_ineq_gener}
u_{t}-\L_{M} u\geq f(u)
\end{equation}
does not admit any nontrivial distributional solution $u\geq 0$ in $(0,\infty)\times M$.
\end{thm}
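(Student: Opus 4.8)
The plan is to follow the proof of Part (i) of Theorem \ref{main_thm2} essentially verbatim, with the global dimension $D$ replaced by the exponent $a$ appearing in \eqref{est_heat_below_ineq_gener}. The heart of the argument is an incompatibility between the \emph{decay} of $e^{-t\L_M}$ forced by Lemma \ref{lem1} and the \emph{lower bound} on $e^{-t\L_M}$ forced by \eqref{est_heat_below_ineq_gener}; these two facts cannot coexist precisely when $p<1+2/a$.

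First I would pass from the distributional inequality to the mild (integral) inequality \eqref{lem_for1}. Assuming for contradiction that $u\ge 0$, $u\in L^p_{loc}(Q)$ with $Q=(0,\infty)\times M$, is a nontrivial distributional solution of \eqref{heat_eq_thm_ineq_gener}, I would test against the nonnegative heat kernel and use Duhamel's formula together with positivity of $h_t$ to show that, for a.e. $t_0>0$, the shifted function $\vartheta(\tau):=u(\tau+t_0,\cdot)$ with $\vartheta_0:=u(t_0,\cdot)$ satisfies
$$\vartheta(\tau)\ge e^{-\tau\L_M}\vartheta_0+\int_0^\tau e^{-(\tau-s)\L_M}f(\vartheta(s))\,ds$$
a.e.\ on $[0,T]\times M$ for every $T>0$, which is exactly hypothesis \eqref{lem_for1}. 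Since $\|h_t\|_{L^1(M)}\le 1$ is assumed, Lemma \ref{lem1} then gives, on letting $T\to\infty$,
$$\|e^{-t\L_M}\vartheta_0\|_{L^\infty(M)}\le A_p\,t^{-\frac{1}{p-1}},\qquad t>0 .$$

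Next I would extract a matching lower bound from \eqref{est_heat_below_ineq_gener}. Exactly as in the derivation of \eqref{proof_eq2}, the estimate $h_t\ge C_1 t^{-a/2}\exp(-C_2(\rho)^2/t)$ for $t>1$, combined with the monotone convergence theorem (the exponential factor increases to $1$ as $t\to\infty$), yields for any measurable $v:M\to[0,\infty]$ the pointwise bound
$$\liminf_{t\to\infty} t^{\frac a2}(e^{-t\L_M}v)(x)\ge C_1\|v\|_{L^1(M)},$$
with the convention $\|v\|_{L^1(M)}=\infty$ when $v\notin L^1(M)$. Applying this with $v=\vartheta_0$ and combining with the decay from the previous step, I would use that $p<1+2/a$ is equivalent to $\tfrac a2<\tfrac1{p-1}$, so that
$$t^{\frac a2}\|e^{-t\L_M}\vartheta_0\|_{L^\infty(M)}\le A_p\,t^{\frac a2-\frac1{p-1}}\longrightarrow 0\qquad(t\to\infty).$$
This forces $\|\vartheta_0\|_{L^1(M)}=0$, i.e.\ $u(t_0,\cdot)=0$ a.e.; as $t_0>0$ was arbitrary, $u\equiv 0$, contradicting nontriviality.

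I expect the main obstacle to be the very first step: the rigorous reduction of the distributional supersolution to the mild inequality \eqref{lem_for1}. This is where one must know that $h_t$ is a genuine positive fundamental solution enjoying the semigroup and reproducing identities exploited in Lemma \ref{lem1}, and that a nonnegative distributional supersolution dominates its own Duhamel representation — an application of positivity of the heat kernel and the parabolic comparison principle on $M$. Once \eqref{lem_for1} is secured, everything else is routine: Lemma \ref{lem1} is invoked as a black box, and the convergence in the lower-bound step requires only measurability of $\rho$ and $\sigma$-finiteness of $\mu$, both standard for sub-Riemannian manifolds with a smooth volume.
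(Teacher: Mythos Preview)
Your proposal is correct and follows essentially the same route as the paper: the paper does not give a separate proof of Theorem \ref{main_thm_gener1} but simply refers the reader to the proof of Part (i) of Theorem \ref{main_thm2} with $D$ replaced by $a$, which is exactly the argument you outline (Lemma \ref{lem1} forces $t^{1/(p-1)}\|e^{-t\L_M}\vartheta_0\|_{L^\infty}$ to stay bounded, while \eqref{est_heat_below_ineq_gener} forces $t^{a/2}e^{-t\L_M}\vartheta_0$ to stay bounded below, and these are incompatible for $p<1+2/a$). Your extra time-shift to $\vartheta_0=u(t_0,\cdot)$ is the natural adaptation to the absence of explicit initial data in the statement, and your flagging of the distributional-to-mild reduction as the delicate step is apt --- the paper handles this implicitly by declaring (just before Theorem \ref{main_thm2}) that it suffices to rule out solutions of the integral inequality, without spelling out the passage.
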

By the proof of Part (ii) of Theorem \ref{main_thm2}, we note that to obtain an analogue of Part (ii) of Theorem \ref{main_thm} on $M$ one needs the following properties:
\begin{enumerate}
\item $h_{t+s}(x,y)=\int_{M}h_{t}(x,z)h_{s}(z,y)d\mu_{z}$ for all $x,y\in M$ and $s,t>0$;
\item $\int_{M}h_{t}(x,y)d\mu_{y}=1, \;\forall x\in M,\;\forall t>0$;
\item There exist constants $\{C_{i}\}_{i=1}^{8}>0$ and $a>0$, $b\geq 0$ such that
\begin{equation}\label{est_heat_gen_less1}
C_{1} t^{-\frac{b}{2}}\exp\left(-C_{2}\frac{(\rho(x,y))^{2}}{t}\right)\leq h_{t}(x,y)\leq C_{3} t^{-\frac{b}{2}}\exp\left(-C_{4}\frac{(\rho(x,y))^{2}}{t}\right),
\end{equation}
for all $0<t<1$ and $x,y\in M$, and
\begin{equation}\label{est_heat_gen_great1}
C_{5} t^{-\frac{a}{2}}\exp\left(-C_{6}\frac{(\rho(x,y))^{2}}{t}\right)\leq h_{t}(x,y)\leq C_{7} t^{-\frac{a}{2}}\exp\left(-C_{8}\frac{(\rho(x,y))^{2}}{t}\right),
\end{equation}
for all $t\geq 1$ and $x,y\in M$.
\end{enumerate}
Note that we always have the above property (1) on $M$ whenever the heat kernel exists.

Therefore, the following theorem can be an analogue of Part (ii) of Theorem \ref{main_thm} on $M$:
\begin{thm}\label{main_thm_gener2} Assume that (2)-(3) hold on $M$ for some $a>0$ and $b\geq 0$. Let $p=1+2/a<\infty$. Let $f:[0,\infty)\to [0,\infty)$ be a locally integrable function such that $f(u)\geq K_{2}u^{p}$ for some positive constant $K_{2}>0$. Then the equation
\begin{equation}\label{heat_eq_thm_gener}
u_{t}-\L_{M} u= f(u)
\end{equation}
does not admit any nontrivial distributional solution $u\geq 0$ in $(0,\infty)\times M$.
\end{thm}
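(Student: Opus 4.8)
The plan is to transcribe the proof of Part~(ii) of Theorem~\ref{main_thm2} to the manifold $M$, using the two-sided bounds \eqref{est_heat_gen_less1}--\eqref{est_heat_gen_great1} and the mass normalisation (property (2)) in place of the specific estimates of Theorem~\ref{est_heat_thm}, and with the global dimension $D$ replaced by $a$; the critical relation is now $(p-1)a/2=1$. Arguing by contradiction, I would first reduce the distributional formulation to the integral one: a nonnegative $u\in L^{p}_{loc}((0,\infty)\times M)$ solving $u_{t}-\L_{M}u=f(u)$ in $\Dcal'$ gives rise, via Duhamel's principle together with the existence, smoothness and positivity of the heat kernel and the semigroup property (property (1)), to the relation
\begin{equation*}
u(t+t_{0})=e^{-t\L_{M}}u(t_{0})+\int_{0}^{t}e^{-(t-s)\L_{M}}f(u(s+t_{0}))\,ds,
\end{equation*}
valid for a.e. $t_{0}>0$ and all $t>0$, which is the analogue of \eqref{int_form_2}. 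Since $u$ is nontrivial, some time slice $u(\tau_{0},\cdot)$ is finite a.e. and positive on a set of positive measure, and this slice plays the role of the initial datum in Theorem~\ref{main_thm2}.

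Next I would extract a uniform $L^{1}$ bound. Applying Corollary~\ref{cor1} on $M$ (legitimate because property (2) gives $\|h_{t}\|_{L^{1}(M)}=1$) yields $\|t^{1/(p-1)}e^{-t\L_{M}}\vartheta(\tau)\|_{L^{\infty}(M)}\leq A_{p}$ for a.e. $\tau$. The large-time lower bound in \eqref{est_heat_gen_great1}, namely $h_{t}(x)\geq C_{5}t^{-a/2}\exp(-C_{6}(\rho(x))^{2}/t)$ for $t\geq1$, furnishes the analogue of \eqref{proof_eq2}, that is $\lim_{t\to\infty}v\ast t^{a/2}h_{t}\geq C\|v\|_{L^{1}(M)}$ pointwise. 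Combining these two facts with $(p-1)a/2=1$ gives, exactly as for \eqref{proof_eq3}, the bound $\|u(\tau)\|_{L^{1}(M)}\leq C$ for a.e. $\tau>0$.

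I would then produce the pointwise lower bound that drives the blow-up of mass. Using both halves of \eqref{est_heat_gen_less1}--\eqref{est_heat_gen_great1} and the triangle inequality for the Carnot--Carath\'eodory distance in the form $(\rho(x,z))^{2}/(4t)\leq((\rho(x))^{2}+(\rho(z))^{2})/(2t)$, one estimates $u(t,x)\geq(u(\tau_{0})\ast h_{t})(x)$ from below and, evaluating at a suitable fixed time $t_{*}$, obtains $u(t_{*},\cdot)\geq C_{3}h_{1}$ on $M$. The semigroup property $h_{t+s}=h_{t}\ast h_{s}$ (property (1)) together with the integral relation then propagates this to $u(s+t_{*})\geq C_{3}h_{s+1}$ for all $s>0$, as in \eqref{proof_eq4}. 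Computing the $L^{1}$-norm of $h_{s+1}^{p}$ from \eqref{est_heat_gen_great1} and the identity $(p-1)a/2=1$ gives $\|h_{s+1}^{p}\|_{L^{1}(M)}\geq C_{5}(s+1)^{-1}$ for $s>1$, the analogue of \eqref{proof_eq5}. Inserting this into the integral relation, using $f(u)\geq K_{2}u^{p}$ and the mass conservation $\|e^{-t\L_{M}}\psi\|_{L^{1}(M)}=\|\psi\|_{L^{1}(M)}$ for $\psi\geq0$ (property (2)), I arrive at $\|u(t+t_{*})\|_{L^{1}(M)}\geq K_{2}C_{3}^{p}C_{5}\int_{1}^{t}(s+1)^{-1}\,ds\to\infty$ as $t\to\infty$, contradicting the uniform bound of the previous step.

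The step I expect to be the main obstacle is the reduction from the distributional solution to the mild (Duhamel) formulation on a general $M$: one must justify the integral identity above from $u\in L^{p}_{loc}$ alone, controlling $f(u)\in L^{1}_{loc}$, the validity of testing against the heat kernel, and the behaviour near $t=0$ needed to identify an effective initial slice $u(\tau_{0},\cdot)$. In the unimodular group case this was subsumed in the passage to Theorem~\ref{main_thm2}; here it must be done directly from properties (1)--(3), after which the remaining estimates are a mechanical transcription with $D$ replaced by $a$.
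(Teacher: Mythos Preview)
Your proposal is correct and follows exactly the paper's approach: the paper does not give a separate proof of Theorem~\ref{main_thm_gener2} but simply observes (in the discussion preceding its statement) that the argument for Part~(ii) of Theorem~\ref{main_thm2} carries over verbatim once properties (1)--(3) are assumed, with $D$ replaced by $a$ and the critical relation $(p-1)a/2=1$. Regarding the obstacle you flag---the passage from the distributional formulation to the Duhamel integral equation---the paper treats this reduction as already understood in the group case (it merely asserts that ``it is enough to prove'' Theorem~\ref{main_thm2}) and does not revisit it on~$M$, so your transcription is faithful to what the paper actually argues.
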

\begin{rem} By a distributional solution, we mean in Theorems \ref{main_thm_gener1} and \ref{main_thm_gener2} a function $u\in L^{p}_{loc}(Q^{M})$ which satisfies \eqref{heat_eq_thm_ineq_gener} and \eqref{heat_eq_thm_gener} in $\mathcal{D}^{\prime}(Q^{M})$, respectively, where $Q^{M}:=(0,\infty)\times M$.
\end{rem}
As for an analogue of Theorem \ref{main_thm3} on $M$, since we already have Theorem \ref{lem2} on $M$, we only need to check \eqref{lem2_for1}. For this, since we have used the estimate for the heat kernel from above in the proof of Theorem \ref{main_thm3}, then to obtain an analogue of Theorem \ref{main_thm3} on $M$ one needs to assume that the following estimates hold on $M$: there exist constants $C_{3},C_{4}, C_{7}, C_{8}>0$ and $a>0$, $b\geq0$ such that
\begin{equation}\label{est_heat_above_gener_less1}
h_{t}(x,y)\leq C_{3} t^{-\frac{b}{2}}\exp\left(-C_{4}\frac{(\rho(x,y))^{2}}{t}\right),
\end{equation}
for all $0<t< 1$ and $x,y\in M$, and
\begin{equation}\label{est_heat_above_gener}
h_{t}(x,y)\leq C_{7} t^{-\frac{a}{2}}\exp\left(-C_{8}\frac{(\rho(x,y))^{2}}{t}\right),
\end{equation}
for all $t\geq 1$ and $x,y\in M$.
Therefore, we have the following theorem on $M$:
\begin{thm}\label{main_thm3_gener} Assume that \eqref{est_heat_above_gener_less1} and \eqref{est_heat_above_gener} hold on $M$ for some $a>0$ and $b\geq 0$. Consider the problem \eqref{Cauchy_intro_Riem} with $1+2/a<p<\infty$. Let $u_{0}\in L^{q}(M)$ with $1\leq q<\infty$ and $\gamma>0$. Let $f:[0,\infty)\to[0,\infty)$ be a continuous increasing function such that $f(u)\leq K_{1}u^{p}$ for some positive constant $K_{1}>0$. There exists $\varepsilon=\varepsilon(\gamma)>0$ such that, for every $y\in M$ if
\begin{equation}\label{proof_part2_for1_gener}
0\leq u_{0}(x)\leq \varepsilon h_{\gamma}(x,y),\;x\in M,
\end{equation}
then there exists a non-negative continuous curve $u:[0,\infty)\rightarrow L^{q}(M)$ which is a global solution to \eqref{Cauchy_intro_Riem} with initial value $u_{0}$. Moreover, we have
\begin{equation}\label{proof_part2_for2_gener}
0\leq u(t,x)\leq Ch_{t+\gamma}(x,y),\;x,y\in M, \;t\in (0,\infty),
\end{equation}
for some $C=C(\gamma)>0$.
\end{thm}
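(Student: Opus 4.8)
The plan is to deduce this from Theorem \ref{lem2}, exactly as Theorem \ref{main_thm3} was deduced in the group case, so that the whole task reduces to verifying the smallness condition \eqref{lem2_for1} under the hypothesis \eqref{proof_part2_for1_gener}. Once \eqref{lem2_for1} is in hand, Theorem \ref{lem2} produces the desired non-negative global solution together with the two-sided estimate \eqref{lem2_for2}, from which the upper bound \eqref{proof_part2_for2_gener} will follow.

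The first step is to obtain the pointwise majorant $e^{-s\L_{M}}u_{0}\leq \varepsilon\, h_{s+\gamma}$. In the group setting this was immediate from the convolution identity $h_{\gamma}\ast h_{s}=h_{s+\gamma}$; on a general sub-Riemannian manifold there is no convolution, so I would instead invoke the semigroup (Chapman--Kolmogorov) property of the heat kernel, which holds whenever the kernel exists, together with its symmetry and positivity. Writing $h_{\gamma}(x)$ for $h_{\gamma}(x,x_{0})$ relative to a fixed base point $x_{0}$ (the same point with respect to which $\rho$ is measured), the hypothesis $u_{0}(y)\leq \varepsilon h_{\gamma}(y)$ and positivity of $h_{s}(x,\cdot)$ give
$$e^{-s\L_{M}}u_{0}(x)=\int_{M}h_{s}(x,y)u_{0}(y)\,d\mu_{y}\leq \varepsilon\int_{M}h_{s}(x,y)h_{\gamma}(y,x_{0})\,d\mu_{y}=\varepsilon\,h_{s+\gamma}(x),$$
the last equality being precisely the Chapman--Kolmogorov identity. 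Taking suprema yields $\|e^{-s\L_{M}}u_{0}\|_{L^{\infty}(M)}\leq \varepsilon\|h_{s+\gamma}\|_{L^{\infty}(M)}$.

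Next I would bound $\|h_{t}\|_{L^{\infty}(M)}$ using the Gaussian upper estimates: for $0<t<1$ inequality \eqref{est_heat_above_gener_less1} gives $\|h_{t}\|_{L^{\infty}(M)}\leq C_{3}t^{-b/2}$, while for $t\geq 1$ inequality \eqref{est_heat_above_gener} gives $\|h_{t}\|_{L^{\infty}(M)}\leq C_{7}t^{-a/2}$ (the exponential factor is maximised at $\rho=0$). After the change of variables $t=s+\gamma$,
$$\int_{0}^{\infty}\|e^{-s\L_{M}}u_{0}\|_{L^{\infty}(M)}^{p-1}\,ds\leq \varepsilon^{p-1}\int_{\gamma}^{\infty}\|h_{t}\|_{L^{\infty}(M)}^{p-1}\,dt\leq C\varepsilon^{p-1}\left(\int_{\min(\gamma,1)}^{1}t^{-b(p-1)/2}\,dt+\int_{1}^{\infty}t^{-a(p-1)/2}\,dt\right),$$
where the first integral is understood to be absent when $\gamma\geq 1$. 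The first integral is finite because its lower limit $\min(\gamma,1)$ is strictly positive, and the second converges precisely because the hypothesis $p>1+2/a$ is equivalent to $a(p-1)/2>1$. Denoting the resulting finite constant by $I(\gamma)$, I would then choose $\varepsilon=\varepsilon(\gamma)>0$ so small that $\varepsilon^{p-1}I(\gamma)<\frac{1}{K_{1}(p-1)}$, which is exactly \eqref{lem2_for1}.

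Finally, Theorem \ref{lem2} applies and furnishes a non-negative continuous curve $u:[0,\infty)\to L^{q}(M)$ solving \eqref{Cauchy_intro_Riem} and satisfying \eqref{lem2_for2}; combining its upper half with the majorant $e^{-t\L_{M}}u_{0}\leq \varepsilon h_{t+\gamma}$ established above yields $0\leq u(t,x)\leq C e^{-t\L_{M}}u_{0}(x)\leq C\varepsilon\,h_{t+\gamma}(x)$, which is \eqref{proof_part2_for2_gener} after absorbing $\varepsilon$ into the constant. The one genuinely new point compared with the group case, and the only place where care is needed, is the replacement of the convolution identity by the Chapman--Kolmogorov property to produce the pointwise bound $e^{-s\L_{M}}u_{0}\leq \varepsilon h_{s+\gamma}$; everything else is a routine repetition of the argument for Theorem \ref{main_thm3}, with the two-regime Gaussian estimates \eqref{est_heat_above_gener_less1}--\eqref{est_heat_above_gener} playing the roles previously filled by Theorem \ref{est_heat_thm}.
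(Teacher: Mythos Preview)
Your proposal is correct and follows exactly the route the paper indicates: the paper does not give a separate proof of Theorem~\ref{main_thm3_gener} but explains just before its statement that one only needs to verify \eqref{lem2_for1} and then invoke Theorem~\ref{lem2}, repeating the computation from the proof of Theorem~\ref{main_thm3} with \eqref{est_heat_above_gener_less1}--\eqref{est_heat_above_gener} replacing Theorem~\ref{est_heat_thm}. Your use of the Chapman--Kolmogorov identity in place of the group convolution $h_{\gamma}\ast h_{s}=h_{s+\gamma}$ is precisely the adaptation needed (the paper records this as property~(1) and notes it always holds), and your integral splitting and convergence argument reproduce the paper's verbatim; incidentally, your factor $\varepsilon^{p-1}$ is the correct one, whereas the paper's displayed chain in the proof of Theorem~\ref{main_thm3} writes $\varepsilon$ (a harmless slip there).
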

In particular, Theorem \ref{main_thm3_gener} implies an analogue of Part (iii) of Theorem \ref{main_thm} on $M$:
\begin{thm}\label{main_thm4_gener} Assume that \eqref{est_heat_above_gener_less1} and \eqref{est_heat_above_gener} hold on $M$ for some $a>0$ and $b\geq 0$. Let $1+2/a<p<\infty$. Let $f:[0,\infty)\to[0,\infty)$ be a continuous increasing function such that $f(u)\leq K_{1}u^{p}$ for some positive constant $K_{1}>0$. Then, for any $1\leq q<\infty$ the Cauchy problem \eqref{Cauchy_intro_Riem} has a global, classical solution for some positive $u_{0}\in L^{q}(M)$.
\end{thm}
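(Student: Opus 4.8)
The plan is to obtain Theorem \ref{main_thm4_gener} as an immediate corollary of Theorem \ref{main_thm3_gener}. That theorem already produces, under exactly the upper heat kernel estimates \eqref{est_heat_above_gener_less1}--\eqref{est_heat_above_gener} assumed here, a global (and, after parabolic regularity, classical) solution to \eqref{Cauchy_intro_Riem} for every initial datum satisfying the pointwise bound \eqref{proof_part2_for1_gener}. Hence the entire task reduces to exhibiting a single positive $u_{0}\in L^{q}(M)$ to which Theorem \ref{main_thm3_gener} applies.

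First I would fix an arbitrary $\gamma>0$ and let $\varepsilon=\varepsilon(\gamma)>0$ be the constant furnished by Theorem \ref{main_thm3_gener}. The canonical choice is then $u_{0}:=\varepsilon h_{\gamma}$. This $u_{0}$ is strictly positive on $M$ by positivity of the heat kernel, and it satisfies $0\leq u_{0}(x)\leq\varepsilon h_{\gamma}(x)$ trivially (with equality), so that \eqref{proof_part2_for1_gener} holds. The only hypothesis of Theorem \ref{main_thm3_gener} left to check is the membership $u_{0}\in L^{q}(M)$ for every $1\leq q<\infty$, that is, $h_{\gamma}\in L^{q}(M)$.

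For this integrability I would combine two facts. On the one hand, evaluating the upper Gaussian bounds \eqref{est_heat_above_gener_less1}--\eqref{est_heat_above_gener} shows $h_{\gamma}\in L^{\infty}(M)$, with $\|h_{\gamma}\|_{L^{\infty}(M)}$ bounded by $C\gamma^{-b/2}$ for $0<\gamma<1$ and by $C\gamma^{-a/2}$ for $\gamma\geq 1$. On the other hand, the sub-Markov property of the heat semigroup gives $\|h_{\gamma}\|_{L^{1}(M)}\leq 1$. Interpolating between these two yields $\|h_{\gamma}\|_{L^{q}(M)}\leq\|h_{\gamma}\|_{L^{\infty}(M)}^{1-1/q}\,\|h_{\gamma}\|_{L^{1}(M)}^{1/q}<\infty$, so indeed $u_{0}=\varepsilon h_{\gamma}\in L^{q}(M)$ for all $1\leq q<\infty$.

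With this admissible datum in hand, Theorem \ref{main_thm3_gener} directly supplies a non-negative continuous curve $u:[0,\infty)\to L^{q}(M)$ solving \eqref{Cauchy_intro_Riem}, together with the control $u(t,x)\leq C h_{t+\gamma}(x)$; classical smoothness follows from hypoellipticity of $\partial_{t}-\L_{M}$ and the smoothing of the heat semigroup, exactly as in the construction underlying Theorem \ref{lem2}. I do not anticipate any genuine obstacle here, since all the analytic content already resides in Theorem \ref{main_thm3_gener}; the present statement is merely its qualitative consequence obtained by the natural choice $u_{0}=\varepsilon h_{\gamma}$. The only point demanding a little care is the verification $h_{\gamma}\in L^{q}(M)$, which is precisely where the assumed upper heat kernel estimates are used.
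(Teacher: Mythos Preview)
Your proposal is correct and matches the paper's approach exactly: the paper simply states that Theorem \ref{main_thm3_gener} implies Theorem \ref{main_thm4_gener} without further comment, and your choice $u_{0}=\varepsilon h_{\gamma}$ together with the verification $h_{\gamma}\in L^{q}(M)$ via $L^{1}$--$L^{\infty}$ interpolation is precisely the natural way to make that implication explicit. The only point worth flagging is that the sub-Markov bound $\|h_{\gamma}\|_{L^{1}(M)}\leq 1$ you invoke is not among the listed hypotheses \eqref{est_heat_above_gener_less1}--\eqref{est_heat_above_gener}, but it holds automatically for the heat semigroup of an essentially self-adjoint sub-Laplacian on a sub-Riemannian manifold, so this is a harmless appeal to background structure rather than a gap.
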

Now we give some examples. Let us first recall the following result from \cite{Sal10} (see also \cite{Gri91} and \cite{Sal92}) on weighted Riemannian manifolds, that is, complete non-compact Riemannian manifolds equipped with a measure $\mu(dy)=\sigma(y)v(dy)$, $0<\sigma\in C^{\infty}(M)$, and the associated weighted Laplacian $\L_{M}^{\sigma}:=\sigma^{-1}\;{\rm div}\;(\sigma\;{\rm grad})$:
\begin{thm}\cite[Theorem 3.1]{Sal10}\label{two_side_est_heat_Sal} Let $M$ be a weighted complete Riemannian manifold. Then the following three properties are equivalent:
\begin{itemize}
\item The parabolic Harnack inequality (PHI).
\item The two-sided heat kernel bound $((t, x, y)\in(0,\infty)\times M\times M)$:
\begin{equation}\label{two_sided_heat_est}
\frac{\widetilde{c_{1}}}{V(x,\sqrt{t})}e^{-\widetilde{C_{1}}\frac{(\rho(x,y))^{2}}{t}}
\leq h_{t}(x,y)\leq \frac{\widetilde{C_{2}}}{V(x,\sqrt{t})}e^{-\widetilde{c_{2}}\frac{(\rho(x,y))^{2}}{t}}.
\end{equation}
\item The conjunction of
\begin{itemize}
\item The volume doubling property
$$\forall x \in M, r>0, V(x,2r)\leq D V(x,r).$$
\item The Poincar\'{e} inequality $(\forall x \in M, r>0, B=B(x,r))$
$$\forall f\in \;{\rm Lip}\;(B),\;\int_{B}|f-f_{B}|^{2}d\mu \leq Pr^{2}\int_{B}|\nabla f|^{2}d\mu,$$
where $f_{B}$ is the mean of $f$ over $B$.
\end{itemize}
\end{itemize}
\end{thm}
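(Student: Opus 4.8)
The statement is the celebrated Grigor'yan--Saloff-Coste characterisation, and the plan is to prove it as a cycle of implications, $(\mathrm{VD}+\mathrm{PI})\Rightarrow(\mathrm{PHI})\Rightarrow\eqref{two_sided_heat_est}\Rightarrow(\mathrm{VD}+\mathrm{PI})$, since each pairwise link is cleanest in one direction. The overall architecture is standard: the volume doubling and Poincaré hypotheses feed a Moser iteration that yields the parabolic Harnack inequality; the Harnack inequality applied to the heat kernel itself produces the two-sided Gaussian bound; and the Gaussian bound in turn returns both hypotheses, closing the loop. The technical heart, and the step I expect to be the main obstacle, is the first implication $(\mathrm{VD}+\mathrm{PI})\Rightarrow(\mathrm{PHI})$, where the two geometric assumptions genuinely combine and where essentially all of the analytic labour sits.

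For that first implication, I would first show that the volume doubling property together with the scale-invariant family of Poincaré inequalities self-improves into a family of Sobolev-type inequalities on balls: for $B=B(x,r)$ and $f$ supported in $B$ one obtains, for some $\nu>2$,
$$\|f\|_{L^{2\nu/(\nu-2)}(B)}^{2}\leq C\,r^{2}\,V(x,r)^{-2/\nu}\int_{B}|\nabla f|^{2}\,d\mu.$$
This Sobolev (equivalently Faber--Krahn) inequality is the genuinely hard output of combining $\mathrm{VD}$ and $\mathrm{PI}$, and it is the place where the dimensional exponent $\nu$ is manufactured. Equipped with it, I would run Moser's iteration on nonnegative sub- and supersolutions of $u_{t}-\L_{M}u=0$ to pass from $L^{2}$ to $L^{\infty}$ control on space-time cylinders, and then control the logarithm of a positive supersolution by a parabolic Bombieri--Giusti / John--Nirenberg lemma to bound its oscillation. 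Chaining the $L^{\infty}$ bound for subsolutions, the lower bound for supersolutions, and the oscillation estimate across adjacent cylinders yields $(\mathrm{PHI})$, i.e.\ the comparison of $\sup$ over a past cylinder with $\inf$ over a future cylinder.

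For $(\mathrm{PHI})\Rightarrow\eqref{two_sided_heat_est}$, I would apply the Harnack inequality directly to the kernel $(s,z)\mapsto h_{s}(x,z)$. On a cylinder of size comparable to $\sqrt{t}$ this first gives the on-diagonal two-sided bound $h_{t}(x,x)\simeq V(x,\sqrt{t})^{-1}$, using the semigroup identity $h_{2t}(x,x)=\int_{M}h_{t}(x,z)^{2}\,d\mu_{z}$ to convert a pointwise Harnack estimate into an $L^{2}$-mass statement. The Gaussian upper bound then follows from the on-diagonal upper bound by Davies' integrated-maximum-principle method, weighting the energy by $\exp(\lambda\rho(\cdot,y))$ and optimising in $\lambda$; the matching lower bound follows by chaining $(\mathrm{PHI})$ along a geodesic from $x$ to $y$ subdivided into roughly $\rho(x,y)^{2}/t$ steps, patched together by the semigroup property.

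Finally, for $\eqref{two_sided_heat_est}\Rightarrow(\mathrm{VD}+\mathrm{PI})$, volume doubling is read off directly by comparing the on-diagonal bound at scales $\sqrt{t}$ and $\sqrt{2t}$ together with the conservativeness $\|h_{t}\|_{L^{1}(M)}=1$, which forces $V(x,\sqrt{2t})\lesssim V(x,\sqrt{t})$. To recover the Poincaré inequality without invoking the already-used first implication, I would note that the Gaussian two-sided bound \emph{independently} implies $(\mathrm{PHI})$ through the semigroup regularity it provides (time-derivative and gradient estimates for $e^{-t\L_{M}}$), and that $(\mathrm{PHI})$ in turn yields $(\mathrm{PI})$ by testing the Harnack inequality against the solution of the heat equation started from the oscillation of $f$ on a ball; together with $\mathrm{VD}$ this gives the conjunction and closes the circle. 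The delicate point to check throughout is the uniformity of all constants in $x$ and in the scale $r$ (scale-invariance), which is exactly what makes the three properties truly equivalent rather than merely implied on a fixed scale.
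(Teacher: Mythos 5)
This statement is quoted background: the paper does not prove it but imports it verbatim from \cite[Theorem 3.1]{Sal10}, where it is attributed to the combined work of Grigor'yan \cite{Gri91} and Saloff-Coste \cite{Sal92}, so there is no in-paper proof to compare against; the expected ``proof'' here is the citation itself. Judged on its own merits, your sketch correctly reproduces the architecture of the literature proof: the self-improvement of volume doubling plus Poincar\'e into a local Sobolev/Faber--Krahn inequality feeding a Moser iteration with a parabolic John--Nirenberg step to get (PHI); then (PHI) applied to $(s,z)\mapsto h_{s}(x,z)$ giving the on-diagonal estimate $h_{t}(x,x)\simeq V(x,\sqrt{t})^{-1}$, the Gaussian upper bound via Davies' method, and the lower bound by Harnack chaining along a geodesic; and volume doubling read off from the two-sided bound. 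All of this matches the proofs surveyed in \cite{Sal10}.

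There is, however, a genuine gap in your closing leg, the recovery of the Poincar\'e inequality from \eqref{two_sided_heat_est}. Your proposed mechanism --- that the Gaussian bound ``independently implies (PHI) through semigroup regularity (time-derivative and gradient estimates for $e^{-t\L_{M}}$)'' --- is unsupported: pointwise gradient estimates on the heat kernel do \emph{not} follow from two-sided Gaussian bounds (they are a strictly stronger, Li--Yau-type property), and the only known direct proofs that Gaussian bounds imply (PHI) are Nash-type arguments in the style of Fabes--Stroock, which you do not supply; any route to (PHI) via (VD)$+$(PI) would make your cycle circular. The standard non-circular closing step derives (PI) directly from the \emph{lower} Gaussian bound: taking $t\simeq r^{2}$, one combines the spectral inequality $\langle f,(I-e^{-t\L_{M}})f\rangle\leq t\int_{M}|\nabla f|^{2}d\mu$ with the pointwise identity/inequality $\langle f,(I-e^{-t\L_{M}})f\rangle\geq \frac{1}{2}\int_{M}\int_{M}|f(x)-f(y)|^{2}h_{t}(x,y)\,d\mu(y)\,d\mu(x)$ and the bound $h_{t}(x,y)\geq c\,V(x,r)^{-1}$ for $\rho(x,y)\leq r$, which yields a weak Poincar\'e inequality on balls (with the Dirichlet energy over all of $M$); one then localizes by a Whitney covering argument to obtain the scale-invariant local (PI) stated in the theorem. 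Replacing your (PHI) detour with this argument closes the loop; everything else in your outline is consistent with the cited proof.
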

\begin{rem} Note that a complete weighted manifold $M$ satisfies (PHI) if and only if the Riemannian product $\mathbb{R}\times M$ satisfies the elliptic
Harnack inequality (see \cite{HS01}).
\end{rem}
We refer to \cite[Section 3.2]{Sal10} for more details.

We note by the proof of Theorem \ref{lem2}, Lemma \ref{lem1} and Corollary \ref{cor1} that we also have Theorem \ref{lem2}, Lemma \ref{lem1} and Corollary \ref{cor1} on weighted Riemannian manifolds satisfying the two-sided heat kernel bound \eqref{two_sided_heat_est} (hence also on weighted Riemannian manifolds satisfying (PHI) by virtue of Theorem \ref{two_side_est_heat_Sal}) with the weighted Laplacian, since \eqref{two_sided_heat_est} also implies that we have the positivity of the heat kernel on such weighted Riemannian manifolds. Examples of such weighted Riemannian manifolds are complete Riemannian manifolds with non-negative Ricci curvature, convex domains in Euclidean space, complements of any convex domain, connected Lie groups with polynomial volume growth, Riemannian manifolds which cover a compact manifold with deck transformation group $\Gamma$, complete Riemannian manifolds $M$ and $N$ such that $M/G=N$, where $G$ is a group of isometries of $M$, the Euclidean space $\Rn$, $n\geq2$, with weight $(1+|x|^{2})^{\alpha/2}$ and $\alpha>-n$. Hence, they are also examples of weighted Riemannian manifolds satisfying (PHI) because of Theorem \ref{two_side_est_heat_Sal} (see e.g. \cite[Section 3.3]{Sal10}).

Since now we have Lemma \ref{lem1} and Corollary \ref{cor1} on weighted Riemannian manifolds satisfying the two-sided heat kernel bound \eqref{two_sided_heat_est} (hence also on weighted Riemannian manifolds satisfying (PHI) by virtue of Theorem \ref{two_side_est_heat_Sal}), then taking into account the above discussions for Theorems \ref{main_thm_gener1}, \ref{main_thm_gener2}, \ref{main_thm3_gener} and \ref{main_thm4_gener}, we obtain these Theorems \ref{main_thm_gener1}, \ref{main_thm_gener2}, \ref{main_thm3_gener}, \ref{main_thm4_gener} on weighted Riemannian manifolds satisfying the two-sided heat kernel bound \eqref{two_sided_heat_est} with the volume growth, such that ultimately an estimate for the heat kernel has to has a form as in \eqref{est_heat_below_ineq_gener} for Theorem \ref{main_thm_gener1}, \eqref{est_heat_gen_less1}-\eqref{est_heat_gen_great1} for Theorem \ref{main_thm_gener2}, and \eqref{est_heat_above_gener_less1}-\eqref{est_heat_above_gener} for Theorems \ref{main_thm3_gener}-\ref{main_thm4_gener}. Here, we want to note that the volume growth does not have to be polynomial, see for example Theorem \ref{main_thm3_exp}, if the volume growth is exponential but we have an estimate of the type \eqref{est_heat_above_exp}.

\end{document}